\theoremstyle{plain}
\newtheorem{theorem}{Theorem}[section]
\newtheorem{lemma}[theorem]{Lemma}
\newtheorem{proposition}[theorem]{Proposition}
\theoremstyle{definition}
\newtheorem{definition}[theorem]{Definition}
\newtheorem{example}[theorem]{Example}
\newtheorem{Conjecture}[theorem]{Conjecture}
\newtheorem{Remark}[theorem]{Remark}
\newtheorem{Question}[theorem]{Question}
\newtheorem{remark}[theorem]{Remark}
\def\Aut{\operatorname{Aut}}
\def\End{\operatorname{End}}
\def\Ker{\operatorname{Ker}}
\def\Der{\operatorname{Der}}
\def\LND{\operatorname{LND}}
\def\Frac{\operatorname{Frac}}
\def\Lie{\operatorname{Lie}}
\def\Spec{\operatorname{Spec}}
\def\alg{\operatorname{alg}}
\def\ord{\operatorname{ord}}
\def\ZZ{{\mathbb Z}}
\def\CC{{\mathbb C}}
\def\TT{{\mathbb T}}
\def\QQ{{\mathbb Q}}
\def\NN{{\mathbb N}}
\def\K{{\mathbb K}}
\def\G{{\mathbb G}}
\def\embed{\hookrightarrow}
\def\Aff{\operatorname{Aff}}
\def\reg{{\rm reg}}
\def\Im{\operatorname{Im}}
\def\LHC{\operatorname{LHC}}
\def\g{{\mathfrak g}}
\def\u{{\mathfrak u}}
\def\t{{\mathfrak t}}
\def\p{{\mathfrak p}}
\def\m{{\mathfrak m}}
\def\U{{\mathcal U}}
\def\SAut{\operatorname{\mathcal U}}
\newcommand\thankssymb[1]{\lowercase{\textsuperscript{\@alph{#1}}}}
\newcommand{\name}[1]{\textsc{#1\/}}
\address{
  Kharkevich Institute for Information Transmission Problems\\
  19 Bolshoy Karetny per., 127994 Moscow, Russia
}
\address{
Moscow Institute of Physics and Technology (State University)\\
9 Institutskiy per., Dolgoprudny, Moscow Region, 141701, Russia 
}
\address{
National Research University Higher School of Economics\\
 20 Myasnitskaya ulitsa, Moscow 101000, Russia 
}
\email{a@perep.ru}
\address{\noindent Institut f\"{u}r Mathematik, Friedrich-Schiller-Universit\"{a}t Jena,   Jena 07737, Germany}
\email{andriyregeta@gmail.com}
\begin{document}

\title[When is the automorphism group nested?]{When is the automorphism group of an affine variety nested?}

\author{Alexander Perepechko\thankssymb{1}}
\thanks{\thankssymb{1}Supported by the Russian Foundation for Sciences (project no. 18-71-00153).}

\author{Andriy Regeta\thankssymb{2}}
\thanks{\thankssymb{2}Partially supported by SNF  (Schweizerischer Nationalfonds), project number P2BSP2\_165390.}

\maketitle

\begin{abstract} 
For an  affine algebraic variety $X$, we study the subgroup 
$\mathrm{Aut}_{\text{alg}}(X)$ of the group 
of regular automorphisms $\mathrm{Aut}(X)$
of $X$ generated by all the connected algebraic subgroups. We prove that 
$\mathrm{Aut}_{\text{alg}}(X)$ is nested, i.e., is a direct limit of algebraic subgroups of $\mathrm{Aut}(X)$, if and only if all the $\mathbb{G}_a$-actions on $X$ commute. Moreover, we describe the structure of such a group $\mathrm{Aut}_{\text{alg}}(X)$.

\end{abstract}

\section{Introduction}

It was proved in 1958 by \name{Matsusaka}  \cite{Mat58} that the neutral component $\Aut^\circ(Y)$ of the automorphism group 
of a projective irreducible algebraic  variety $Y$ is an algebraic group.
For affine algebraic varieties the situation is quite different. For example 
 the automorphism group $\Aut(\mathbb{A}^n)$ of an  affine $n$-space contains a copy of a polynomial ring in $n-1$ variables.
 Hence, there is no way to put a structure of an algebraic group on $\Aut(\mathbb{A}^n)$ for $n \ge 2$. In \cite{Sh66}  \name{Shafarevich} introduced the notion of \emph{ind-group}.
 It is known that for an affine  variety $X$ its automorphism group  $\Aut(X)$ has a natural structure of an ind-group (see  \cite[Section 5]{FK} and \cite[Section 2]{KPZ1} for details). 

The base field $\K$ is algebraically closed of zero characteristic, and the additive group of $\K$ is denoted by $\G_a$.  Through the whole paper
$X$ denotes an irreducible affine algebraic variety.
We call an element $g \in \Aut(X)$ \emph{algebraic} if there is an algebraic subgroup $G$ of the ind-group $\Aut(X)$.  We also denote by $\U(X)\subset \Aut(X)$ the (possibly trivial) subgroup generated by all the $\G_a$-actions.
It is also called the \emph{special automorphism group} and denoted by ${\mathrm SAut}(X)$.

 In  \cite{KPZ1} and \cite{PZ}   the neutral component $\Aut^{\circ}(X)$ of the ind-group of automorphisms $\Aut(X)$ of an affine surface $X$
 has been studied. Note that $\Aut^{\circ}(X)$ is a closed subgroup of $\Aut(X)$. 
 The equivalence of the following conditions is claimed:
 
 \begin{itemize}
\item  all elements  of $\Aut^{\circ}(X)$ are algebraic;
\item the subgroup $\Aut^{\circ}(X) \subset \Aut(X)$ is nested, i.e., is a direct limit of algebraic subgroups; 
\item $\Aut^\circ(X) = \TT \ltimes \U(X)$, where $\TT$ is a maximal subtorus of $\Aut(X)$. 
\end{itemize}
 
 Our intention is to prove this result independently in arbitrary dimension. 
Originally, we were motivated by  Conjecture~\ref{motiv-c} and Question~\ref{motiv-q}.
\begin{Conjecture}[\name{P.}--\name{Zaidenberg}, Feb.'13]\label{motiv-c}
An affine variety does not admit additive group actions if and only if the neutral component of the automorphism group is an algebraic torus.
\end{Conjecture}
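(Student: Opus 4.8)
The plan is to prove the two implications separately, the reverse one being elementary and the forward one resting on the main theorem of this paper. For the implication ``$\Aut^\circ(X)=\TT$ is an algebraic torus $\Rightarrow$ $X$ admits no nontrivial $\G_a$-action,'' I would argue directly: a nontrivial $\G_a$-action is a nonconstant homomorphism $\G_a\to\Aut(X)$ whose image is a connected algebraic subgroup, hence lies in $\Aut^\circ(X)=\TT$. Since in characteristic zero $\Hom(\G_a,\G_m)=0$, a torus contains no copy of $\G_a$, so the image is trivial and the action is trivial, a contradiction. This settles one direction at once.

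For the converse, assume $X$ carries no nontrivial $\G_a$-action, i.e. $\U(X)=\{\id\}$. The crucial local claim I would isolate is that \emph{every connected algebraic subgroup $G\subseteq\Aut(X)$ is an algebraic torus.} First, $G$ is linear: its maximal anti-affine subgroup (in particular any abelian-variety factor) admits only constant morphisms to the affine variety $X$, so it acts with point orbits, hence trivially; by faithfulness it is trivial, and $G=G/G_{\mathrm{ant}}$ is affine. Second, a connected linear algebraic group in characteristic zero that contains no one-parameter unipotent subgroup is a torus; and since an embedding $\G_a\hookrightarrow G$ would yield a nontrivial $\G_a$-action on $X$, the group $G$ has no such subgroup and is therefore a torus.

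Next I would globalize. Because there are no $\G_a$-actions at all, the hypothesis ``all $\G_a$-actions commute'' holds vacuously, so the main theorem applies and $\Aut_{\mathrm{alg}}(X)$ is nested, say $\Aut_{\mathrm{alg}}(X)=\varinjlim G_i$ with each $G_i$ a connected algebraic group. Given two tori $\TT_1,\TT_2\subseteq\Aut_{\mathrm{alg}}(X)$, nestedness places them inside a common $G_i$, which by the local claim is itself a torus; hence $\TT_1$ and $\TT_2$ commute and together generate a torus. Since the rank of any torus acting faithfully on $X$ is bounded by $\dim X$ (the generic stabilizer of an effective torus action is finite), this increasing union of tori stabilizes, and $\Aut_{\mathrm{alg}}(X)=\TT$ is a single maximal torus. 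It then remains to identify $\Aut^\circ(X)$ with $\TT$: the inclusion $\TT\subseteq\Aut^\circ(X)$ is immediate, and for the reverse one invokes the structure description furnished by the main theorem to conclude that the identity component contains no elements beyond $\Aut_{\mathrm{alg}}(X)=\TT$.

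I expect the final step to be the main obstacle: passing from the algebraically generated group $\Aut_{\mathrm{alg}}(X)$ to the full neutral component $\Aut^\circ(X)$, that is, excluding non-algebraic elements of the identity component, is precisely the delicate point where the structure part of the main theorem must be used in full strength. A secondary, more technical matter to verify carefully is the completeness-of-orbits argument that kills anti-affine factors, together with the uniform bound on torus ranks that forces the direct limit to stabilize.
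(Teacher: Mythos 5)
There is a genuine gap, and it sits exactly where you predicted it would: the passage from $\Aut_{\alg}(X)$ to $\Aut^\circ(X)$. Note first that the statement you are proving is stated in the paper as a \emph{conjecture}, not a theorem: the paper proves only the analogue with $\Aut_{\alg}(X)$ in place of $\Aut^\circ(X)$ (Theorem~\ref{main} and Theorem~\ref{th:alg}(ii)), and explicitly formulates the remaining step --- relating $\Aut^\circ(X)$ to the algebraically generated subgroup --- as a further open conjecture. Your argument up to that point is sound and matches what the paper actually establishes: the reverse implication (a torus contains no $\G_a$), the observation that with no $\G_a$-actions every connected algebraic subgroup of $\Aut(X)$ is a torus, and the conclusion via Theorem~\ref{main} that $\Aut_{\alg}(X)=\TT$ is a single maximal torus (your direct-limit stabilization argument is a correct but redundant detour, since condition (4) of Theorem~\ref{main} with $\U(X)$ trivial gives this at once).

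The unsupported step is the sentence ``one invokes the structure description furnished by the main theorem to conclude that the identity component contains no elements beyond $\Aut_{\alg}(X)=\TT$.'' The main theorem furnishes no such description: all four of its equivalent conditions concern only $\Aut_{\alg}(X)$, and none of them says anything about elements of $\Aut^\circ(X)$ that lie outside the subgroup generated by connected algebraic subgroups. A priori the inclusion $\Aut_{\alg}(X)\subset\Aut^\circ(X)$ may be strict: the neutral component of the ind-group $\Aut(X)$ is defined topologically, and it is not known in general that it is algebraically generated, nor even that $\Aut_{\alg}(X)$ is dense in it (density would suffice here, since the torus $\TT$ is closed in $\Aut(X)$, but density is equally out of reach). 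Excluding such non-algebraic elements of the identity component is precisely the open problem; it is why the conjecture was proved only under additional hypotheses in the literature the paper cites (finite-dimensionality of $\Aut^\circ(X)$ in \cite{Kr}, special T-varieties in \cite{AG}). So your proposal, as written, reduces the conjecture to itself at the final step rather than proving it.
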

The statement that the neutral component is a torus was proved in \cite[Theorem 1.3]{Kr} under the assumption that $\Aut^\circ(X)$ is finite-dimensional and in \cite[Proposition 3.2]{AG} for T-varieties satisfying certain conditions.
\begin{Question}[\name{Kraft}] \label{motiv-q}
{Which affine varieties have automorphism groups comprised of algebraic elements?}
\end{Question}
  
We provide a partial answer to Question~\ref{motiv-q} in Theorem~\ref{th:alg}.
In the direction of the intended generalization 
we prove in the present paper the following  statement.

\begin{theorem}\label{main}
Given an affine variety $X$, let $\Aut_{\alg}(X)$ be the subgroup of $\Aut(X)$ generated by all connected algebraic subgroups. The following conditions are equivalent:
\begin{enumerate}[(1)]
\item $\U(X)$ is abelian;
\item all elements  of $\Aut_{\alg}(X)$ are algebraic;
\item the subgroup $\Aut_{\alg}(X) \subset \Aut(X)$ is a closed nested ind-subgroup;
\item $\Aut_{\alg}(X) = \TT \ltimes \U(X)$, where $\TT$ is a maximal subtorus of $\Aut(X)$, and $\U(X)$ is closed in $\Aut(X)$. 
\end{enumerate}
 \end{theorem}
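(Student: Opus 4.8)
The plan is to prove the cyclic chain $(1)\Rightarrow(4)\Rightarrow(3)\Rightarrow(2)\Rightarrow(1)$, with almost all of the work concentrated in the final implication. Two general facts will be used throughout. First, $\U(X)$ is a \emph{normal} subgroup of all of $\Aut(X)$: conjugating a one-parameter unipotent subgroup by any automorphism again produces a $\G_a$-action, so the collection of all $\G_a$-subgroups, and hence the group $\U(X)$ they generate, is preserved by conjugation. Second, I will constantly translate between a $\G_a$-action and its locally nilpotent derivation $\delta\in\LND(\O(X))$, so that "the $\G_a$-actions commute" reads as "the corresponding elements of $\LND(\O(X))$ commute".

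For $(1)\Rightarrow(4)$, assume $\U(X)$ is abelian. I first claim every connected algebraic subgroup $G\subseteq\Aut(X)$ is solvable: writing a Levi decomposition $G=L\ltimes R_u(G)$, a non-toral reductive factor of $L$ would contain a copy of $\SL_2$ or $\PSL_2$, whose two opposite root subgroups are non-commuting $\G_a$-actions, contradicting that $\U(X)$ is abelian. Hence each such $G$ equals $T\ltimes R_u(G)$ with $T$ a torus and $R_u(G)\subseteq\U(X)$ commutative unipotent. Since $\U(X)\trianglelefteq\Aut_{\alg}(X)$ is abelian and contains every unipotent part, the quotient $\Aut_{\alg}(X)/\U(X)$ is generated by the images of these tori; I then argue this quotient is a single torus $\TT$ (any failure of two tori to commute modulo $\U(X)$ would, via their unipotent parts, again manufacture non-commuting $\G_a$'s), and that $\TT$ lifts to a maximal subtorus of $\Aut(X)$ splitting the extension, giving $\Aut_{\alg}(X)=\TT\ltimes\U(X)$. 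Closedness of $\U(X)$ follows from its presentation as a filtered union of the closed unipotent radicals.

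The steps $(4)\Rightarrow(3)\Rightarrow(2)$ are comparatively formal. Given $(4)$, the abelian group $\U(X)$, being generated by commuting $\G_a$'s, is an increasing union of finite-dimensional vector groups $V_n\cong\G_a^{k_n}$; taking the $V_n$ to be $\TT$-stable lets one write $\TT\ltimes\U(X)=\varinjlim(\TT\ltimes V_n)$ as a nested union of algebraic subgroups, so $\Aut_{\alg}(X)$ is a closed nested ind-subgroup. For $(3)\Rightarrow(2)$, if $\Aut_{\alg}(X)=\varinjlim G_i$ with each $G_i$ a closed algebraic subgroup acting faithfully, then every element lies in some $G_i$ and is algebraic by definition.

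The main obstacle is $(2)\Rightarrow(1)$, which I approach by contraposition: assuming $\U(X)$ is non-abelian, I must exhibit a non-algebraic element of $\Aut_{\alg}(X)$. Choose $\delta_1,\delta_2\in\LND(\O(X))$ with $[\delta_1,\delta_2]\neq0$. If the Lie algebra they generate is infinite-dimensional, I expect a suitably chosen alternating product $\exp(t_1\delta_1)\exp(s_1\delta_2)\cdots$ to have unbounded degree growth under iteration in a fixed closed embedding $X\hookrightarrow\A^N$; since the elements of any one algebraic subgroup have uniformly bounded degree, such a product is non-algebraic. The delicate case is when $\delta_1,\delta_2$ generate a finite-dimensional Lie algebra, which is then the Lie algebra of an algebraic subgroup of $\Aut(X)$ all of whose elements are algebraic (for instance a Heisenberg or $\mathfrak{sl}_2$ configuration), so no contradiction is yet visible. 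Here the key device is to pass to \emph{replicas}: for $0\neq h\in\Ker\delta_1$ the derivation $h\delta_1$ is again locally nilpotent (and $\Ker\delta_1\neq\K$ since non-commuting $\G_a$-actions force $\dim X\ge2$), yielding genuinely new $\G_a$-actions outside the finite-dimensional algebra. Bracketing these replicas against $\delta_2$ makes the generated group infinite-dimensional, returning us to the degree-growth argument. Turning this into a rigorous statement — controlling $\deg(\exp(t\delta))$ under composition and proving that the relevant iterates escape every bound — is where the real effort lies and is the step I expect to be the crux of the whole proof.
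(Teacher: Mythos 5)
Your cyclic skeleton $(1)\Rightarrow(4)\Rightarrow(3)\Rightarrow(2)\Rightarrow(1)$ matches the paper's (which proves $(2)\Rightarrow(1)$ and $(1)\Rightarrow(4)$ and notes that $(4)\Rightarrow(3)\Rightarrow(2)$ are clear), and your handling of the two formal implications is fine; but both substantive implications in your proposal contain genuine gaps. For $(2)\Rightarrow(1)$ you concede that the crux --- exhibiting a concrete element and proving it is non-algebraic --- is not carried out, and that is exactly where the paper's work lies. The paper first reduces, via Lemma~\ref{lem:equivalent}, from ``$\U(X)$ non-abelian'' to the existence of two \emph{non-equivalent} (possibly commuting!) LNDs $\partial_1,\partial_2$; your starting point of non-commuting LNDs is permissible but your subsequent dichotomy on the dimension of the Lie algebra they generate is both unnecessary and unhelpful: an infinite-dimensional Lie algebra generated by two LNDs does not by itself produce a group element of unbounded degree growth --- one still has to exhibit a specific element and control its iterates. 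The paper does this (Proposition~\ref{nonlocfin}) by choosing a smooth point $p$ where the two vector fields are independent and off the zero loci of the plinth ideals, passing to the completion $\hat\O_p(X)=\K[\![x_1,\ldots,x_n]\!]$, using Miyanishi's theorem to build replicas $f_1\partial_1,f_2\partial_2$ with prescribed lowest homogeneous components $x_2^d\tfrac{\partial}{\partial x_1}$, $x_1^d\tfrac{\partial}{\partial x_2}$ (Lemma~\ref{pr:x1+x2}), and then showing that for $g=\exp(f_1\partial_1)\circ\exp(f_2\partial_2)$ the operator $g^*-\id$ strictly raises lowest degrees, so $g^*$ is not locally finite and $g$ is not algebraic. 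Your replica idea points in the right direction, but the degree-growth criterion you invoke for alternating products in an embedding $X\hookrightarrow\A^N$ is never established, and it is this analytic control that constitutes the proof.

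In $(1)\Rightarrow(4)$ your first step (every connected algebraic subgroup is solvable, since a non-toral Levi factor would contain $\SL_2$ with its two non-commuting root $\G_a$-subgroups) is correct, but the decisive step --- that the images of all the tori in $\Aut_{\alg}(X)/\U(X)$ assemble into a single torus and that the extension splits by a \emph{maximal} subtorus of $\Aut(X)$ --- is asserted, not proven. Distinct maximal tori in an ind-group need not be conjugate, and there is no a priori reason a commutator of elements of two different tori lies in $\U(X)$ at all; ``manufacturing non-commuting $\G_a$'s'' from such a failure is precisely what must be demonstrated. The paper's substitute is Lie-algebra-theoretic: fix a maximal torus $\TT$, grade $\g=\Lie\Aut(X)$ by its character lattice, show that nonzero-weight vertex components of a locally finite derivation are locally nilpotent (Lemma~\ref{l:hom-lnd}), and prove --- using crucially that all LNDs are equivalent, so all homogeneous components of an LND are rational multiples of a single LND --- that subtracting an LND from a locally finite derivation preserves local finiteness (Lemma~\ref{lem:semisimple}); an induction then yields that every locally finite derivation lies in $\t\oplus\u$ (Proposition~\ref{th:lie}), whence every connected algebraic subgroup lies in $\TT\ltimes\U(X)$ (Proposition~\ref{Prop4.4}). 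Without an argument of comparable strength, your quotient-by-$\U(X)$ construction does not go through.
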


\begin{Remark}
 If $\dim X \ge 2$, then $\U(X)$ is either trivial or infinite-dimensional. 
 Indeed, if there exists a $\G_a$-action corresponding to a locally nilpotent derivation $\partial$, 
 then we have an infinite-dimensional unipotent subgroup $\exp((\ker\partial)\cdot\partial)\subset\U(X)$, e.g. see \cite[Theorem 6.3]{KPZ}.
\end{Remark}

We expect that  Theorem~\ref{main}  holds if we replace  $\Aut_{\alg}(X)$   by $\Aut^\circ(X)$. In particular, we formulate the following extension of Conjecture~\ref{motiv-c}.

\begin{Conjecture}
If $X$ is an affine variety, then $\U(X)$ is abelian if and only if $\Aut^\circ(X)$ is nested.
\end{Conjecture}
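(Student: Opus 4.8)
The plan is to prove the cyclic chain of implications $(1) \Rightarrow (4) \Rightarrow (3) \Rightarrow (2) \Rightarrow (1)$, which is the most economical route since each arrow is naturally weaker or structurally close to the next. The conceptual heart of the whole theorem is the implication $(1) \Rightarrow (4)$: assuming $\U(X)$ is abelian, I must produce the semidirect product decomposition $\Aut_{\alg}(X) = \TT \ltimes \U(X)$ with $\U(X)$ closed. Let me sketch how I would proceed and where the difficulty concentrates.

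\textbf{From $(1)$ to $(4)$.} Assume $\U(X)$ is abelian. The key structural input is that every connected algebraic subgroup $G \subset \Aut(X)$ has a Levi–Chevalley-type decomposition $G = R \ltimes G_u$ with $G_u$ its unipotent radical and $R$ reductive. Every one-parameter unipotent subgroup of $G$ is a $\G_a$-action on $X$, hence lies in $\U(X)$; since $\U(X)$ is assumed abelian, all such unipotent elements commute. I would first argue that a connected reductive group all of whose unipotent one-parameter subgroups commute must be a torus: a nontrivial semisimple part contains noncommuting root subgroups $U_\alpha, U_{-\alpha}$, contradicting commutativity of $\U(X)$. Thus every connected algebraic subgroup of $\Aut(X)$ is of the form $\TT' \ltimes G_u$ with $\TT'$ a torus and $G_u \subset \U(X)$ unipotent and commutative. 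The next step is to show that all these tori lie in a single maximal torus $\TT$ of $\Aut(X)$: since the $\G_a$-part is a fixed abelian group $\U(X)$, I would analyze the conjugation action of tori on $\U(X)$ and use that two maximal tori inside $\TT \ltimes \U(X)$ are conjugate by an element of $\U(X)$, so after fixing $\TT$ every torus embeds into it up to $\U(X)$-conjugacy. Taking the direct limit over all connected algebraic subgroups then yields $\Aut_{\alg}(X) = \TT \cdot \U(X)$ with $\TT \cap \U(X)$ trivial and $\U(X)$ normal, i.e. the semidirect product. Closedness of $\U(X)$ should follow from exhibiting it as the kernel of a natural character-type or weight decomposition, or from the fact that it is the subset of $\Aut_{\alg}(X)$ consisting of unipotent elements, which is closed in the ind-topology.

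\textbf{From $(4)$ to $(3)$ to $(2)$.} These are the comparatively routine arrows. Given the decomposition $\TT \ltimes \U(X)$ with $\U(X)$ abelian and closed, I would write $\Aut_{\alg}(X)$ as the union of the algebraic subgroups $\TT \ltimes U_i$, where $U_i$ ranges over the finite-dimensional $\TT$-stable unipotent subgroups exhausting $\U(X)$ (each $U_i$ is a finite-dimensional commutative unipotent group, hence algebraic, and $\TT$-invariance makes $\TT \ltimes U_i$ algebraic). This exhibits $\Aut_{\alg}(X)$ as a nested union of algebraic subgroups, giving $(3)$, and closedness in $\Aut(X)$ comes from closedness of $\U(X)$ together with that of $\TT$. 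Implication $(3) \Rightarrow (2)$ is essentially definitional: in a nested ind-group every element lies in some algebraic subgroup in the chain, hence is algebraic.

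\textbf{From $(2)$ to $(1)$.} For the final arrow I would argue contrapositively. If $\U(X)$ is nonabelian, there exist two $\G_a$-actions whose associated locally nilpotent derivations do not commute; the subgroup they generate in $\Aut(X)$ typically fails to be algebraic (one expects the commutator to generate an infinite-dimensional unipotent group, or to produce a free-product-like structure), and one should be able to locate a specific non-algebraic element inside $\Aut_{\alg}(X)$, contradicting $(2)$. The main obstacle of the whole proof is the structural analysis in $(1) \Rightarrow (4)$: controlling \emph{all} connected algebraic subgroups simultaneously and showing their tori can be absorbed into one maximal $\TT$ requires a genuine conjugacy argument in the infinite-dimensional ind-group rather than a purely formal manipulation, and establishing that $\U(X)$ is closed (not merely a union of algebraic subgroups) is the delicate point where the ind-topology must be handled with care.
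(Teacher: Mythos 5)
The statement you were asked to prove is the paper's open \emph{Conjecture} about $\Aut^\circ(X)$, the neutral component of the ind-group $\Aut(X)$ --- not Theorem~\ref{main}, which concerns $\Aut_{\alg}(X)$. Your proposal, however, is entirely an argument about $\Aut_{\alg}(X)$: the four numbered conditions you cycle through are those of Theorem~\ref{main}, and every group you construct is built from connected algebraic subgroups, so the nested union you produce is by definition contained in $\Aut_{\alg}(X)$. Nothing in the proposal addresses the actual content of the conjecture, namely whether the a priori larger group $\Aut^\circ(X)$ coincides with this nested union. The paper proves exactly what you outline (Proposition~\ref{Prop4.4}: if $\U(X)$ is abelian then $\Aut_{\alg}(X)=\TT\ltimes\U(X)$ is closed and normal in $\Aut^\circ(X)$) and then explicitly states that replacing $\Aut_{\alg}(X)$ by $\Aut^\circ(X)$ is \emph{expected but unproven}. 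The missing idea --- which neither your proposal nor the paper supplies --- is a proof that the neutral component of $\Aut(X)$ contains no elements lying outside the subgroup generated by connected algebraic subgroups; for infinite-dimensional ind-groups, connectedness in the ind-topology is not known to force membership in an algebraic subgroup, and this is precisely why the statement remains a conjecture. (The converse direction, that nestedness of $\Aut^\circ(X)$ forces $\U(X)$ abelian, does follow from the paper's machinery: nestedness makes every element of $\U(X)\subset\Aut^\circ(X)$ algebraic, contradicting Proposition~\ref{nonlocfin}(2) unless all $\G_a$-actions are equivalent.)

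Even judged as a proof of Theorem~\ref{main}, two steps are unsound as written. First, your $(2)\Rightarrow(1)$ is hand-waved (``typically fails to be algebraic,'' ``one expects''): condition (2) concerns individual elements being algebraic, not generated subgroups, so you must exhibit a concrete non-algebraic element; the paper does this via a delicate completion argument (Proposition~\ref{nonlocfin}), building $\partial=f_1\partial_1+f_2\partial_2$ from two \emph{non-equivalent} (not merely non-commuting --- see Lemma~\ref{lem:equivalent}) LNDs with prescribed lowest homogeneous component $x_2^d\frac{\partial}{\partial x_1}+x_1^d\frac{\partial}{\partial x_2}$, and showing $\exp(f_1\partial_1)\circ\exp(f_2\partial_2)$ does not act locally finitely. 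Second, your $(1)\Rightarrow(4)$ relies on conjugacy of maximal tori inside $\TT\ltimes\U(X)$ before you have established that all connected algebraic subgroups lie in such a product --- a circularity; moreover, conjugacy of maximal tori is not available in the ind-group setting. The paper avoids this entirely by working at the Lie algebra level (Lemma~\ref{lem:semisimple} and Proposition~\ref{th:lie}), using Jordan decomposition of locally finite derivations and the grading induced by a fixed maximal torus to show every locally finite derivation lies in $\t\oplus\u$.
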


\emph{Acknowledgements}: We would like to thank \name{Hanspeter Kraft} and \name{Mikhail Zaidenberg}  for motivation, useful discussions, and numerous remarks. 
We also thank the referees for the useful remarks.

\section{Preliminaries}

\subsection{ Derivations and group actions}
Recall that $X$ is an irreducible affine algebraic variety.
A derivation $\delta$ is called \emph{locally finite} if it acts locally finitely on ${\mathcal O}(X)$, i.e., for any $f \in {\mathcal O}(X)$ there is a  finite-dimensional vector subspace
$V \subset {\mathcal O}(X)$ such that $f \in V$ and  $V$ is stable under action of $\delta$.
A derivation $\delta \in \Der({\mathcal O}(X))$ is called \emph{semisimple} if there exists a basis 
$\{ f_i \mid i \in \mathbb{N} \}$ of the vector space ${\mathcal O}(X)$ such that $\delta(f_i) \in \K f_i$. Finally, 
a derivation $\delta \in \Der({\mathcal O}(X))$ is called \emph{locally nilpotent} if for any  $f \in {\mathcal O}(X)$ there exists $n \in \mathbb{N}$ (which depends on $f$)
such that $\delta^n(f) = 0$. 
There is a one-to-one correspondence between locally nilpotent derivations on ${\mathcal O}(X)$ and $\G_a$-actions on 
$X$ given by the map $\delta\mapsto\{t\mapsto\exp(t\delta)\}$.
We denote the set of locally nilpotent derivations (LNDs) on ${\mathcal O}(X)$ by $\LND(X)$. 
We call two LNDs $\partial_1,\partial_2$ \emph{equivalent} if their kernels coincide. By \cite[Principle 12]{Fr}, equivalence of $\partial_1$ and $\partial_2$ implies that $\partial_1=c\partial_2$ for some $c\in\Frac\ker\partial_1$. If $\partial_1$ and $\partial_2$ are equivalent,  we call the corresponding $\G_a$-actions $\exp(t\partial_1)$ and $\exp(t\partial_2)$ equivalent as well. Note that these $\G_a$-actions have the same general orbits and hence commute.  Indeed, a general orbit is an affine line, and any two unipotent elements of $\Aut({\mathbb A}^1)\cong\G_a\ltimes\G_m$ commute.

An element $u\in\Aut(X)$ is called \emph{unipotent} if $u=\exp(\partial)$ for some $\partial\in\LND(X)$. An (ind-)subgroup $U\subset\Aut(X)$ is called \emph{unipotent} if each $u\in U$ is unipotent. 
\begin{definition}
 We denote the Lie subalgebra of the Lie algebra of derivations $\Der({\mathcal O}(X))$ on $X$ generated by all LNDs by $$\u(X)=\langle \partial\mid\partial\in\LND(X)\rangle,$$ and the automorphism subgroup generated  by the   unipotent elements by 
 \[\U(X)=\langle \exp(\partial)\mid \partial\in\LND(X)\rangle\subset\Aut(X).\]
\end{definition}

\begin{lemma}\label{lem:equivalent}
The unipotent-generated subgroup $\U(X)$ is abelian if and only if all LNDs on $X$ are equivalent.
\end{lemma}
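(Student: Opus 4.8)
The plan is to translate the group-theoretic hypothesis on $\U(X)$ into a Lie-algebraic condition on $\LND(X)$, and then to upgrade that condition to an equality of kernels. Throughout, note that $\U(X)$ is generated by the one-parameter subgroups $\{\exp(t\partial)\mid t\in\K\}$ with $\partial\in\LND(X)$, so $\U(X)$ is abelian if and only if any two such generators commute, that is, if and only if $\exp(t\partial_1)$ and $\exp(s\partial_2)$ commute for all $s,t\in\K$ and all $\partial_1,\partial_2\in\LND(X)$.

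For the implication ``all LNDs equivalent $\Rightarrow\U(X)$ abelian'' I would simply invoke the observation recorded just before the lemma: equivalent LNDs define $\G_a$-actions with the same general orbits, which therefore commute. Hence all generators commute pairwise and $\U(X)$ is abelian. The substance lies in the converse, which I would prove in two steps. First, I claim that $\U(X)$ being abelian forces $[\partial_1,\partial_2]=0$ for all $\partial_1,\partial_2\in\LND(X)$. Indeed, for a fixed $f\in\O(X)$ local nilpotency makes $\exp(t\partial_1)\exp(s\partial_2)(f)$ a polynomial in $s$ and $t$, so the identity $\exp(t\partial_1)\exp(s\partial_2)=\exp(s\partial_2)\exp(t\partial_1)$ of operators on $\O(X)$ may be differentiated; comparing the coefficients of $ts$ (equivalently, applying $\partial/\partial t\,\partial/\partial s$ at $s=t=0$) yields $\partial_1\partial_2=\partial_2\partial_1$. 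The contravariance between $\Aut(X)$ and $\K$-algebra automorphisms is harmless here, since commutativity is symmetric.

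The second step is the heart of the argument, and it exploits that the hypothesis controls all LNDs simultaneously, not just a single pair. Fix nonzero $\partial_1,\partial_2\in\LND(X)$ and take any $h\in\ker\partial_1$. Then $h\partial_1$ is again locally nilpotent, since $(h\partial_1)^n=h^n\partial_1^n$, whence $h\partial_1\in\LND(X)$. Applying the first step to $\partial_2$ and $h\partial_1$ and expanding the bracket gives
\[
0=[\partial_2,h\partial_1]=\partial_2(h)\,\partial_1+h\,[\partial_2,\partial_1]=\partial_2(h)\,\partial_1,
\]
so, as $\partial_1\ne0$ and $\O(X)$ is a domain, $\partial_2(h)=0$, i.e. $h\in\ker\partial_2$. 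Thus $\ker\partial_1\subseteq\ker\partial_2$, and by symmetry $\ker\partial_1=\ker\partial_2$; that is, $\partial_1$ and $\partial_2$ are equivalent.

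The main obstacle is precisely this second step. Knowing only that one pair of flows commutes is too weak: on $\A^2$ the derivations $\partial/\partial x$ and $\partial/\partial y$ commute yet are inequivalent, so the vanishing of an isolated bracket cannot yield equal kernels. The crucial leverage is that multiplication by any $h\in\ker\partial_1$ keeps $\partial_1$ inside $\LND(X)$, which lets the abelianness of the entire group $\U(X)$ act on the whole family $\{h\partial_1\}_{h\in\ker\partial_1}$ and thereby pin down $\ker\partial_2$. I would also take minor care with the degenerate case of the zero derivation by restricting the equivalence statement to nonzero LNDs, the case of trivial $\U(X)$ being vacuous.
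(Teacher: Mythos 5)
Your proof is correct and is essentially the paper's argument: both hinge on the computation $[\partial_2,h\partial_1]=\partial_2(h)\,\partial_1$ for $h\in\ker\partial_1$, the only differences being that you run it directly (abelian $\Rightarrow$ equal kernels) where the paper argues contrapositively, and that you spell out via differentiation the equivalence between commuting $\G_a$-actions and commuting LNDs, which the paper simply asserts.
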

\begin{proof}
 If all LNDs are equivalent, then they share the same kernel and coincide up to a multiplication by kernel elements. Thus, their exponents commute and comprise $\U(X)$.
 
 Assume the contrary, let $\partial_1$ and $\partial_2$ be two non-equivalent LNDs. If $\partial_1$ and $\partial_2$ do not commute, then the corresponding $\G_a$-actions do not commute too and the proof follows. Hence, we can assume that $\partial_1$ and $\partial_2$ commute. Since $\partial_1$ and $\partial_2$ are not equivalent, 
 there exists 
$f \in \ker \partial_1$ that does not belong to $\ker \partial_2$. Hence, $[\partial_2,f \partial_1] =
\partial_2(f)\partial_1 \neq 0$. Non-commutativity of the
LNDs $\partial_2$ and $f \partial_1$ implies non-commutativity of the $\G_a$-actions $\{ \exp(t\partial_2)\mid t\in\K \}$ and 
 $\{ \exp(t f\partial_1)\mid t\in\K \}$. The proof follows.
\end{proof}

\subsection{Ind-groups}

The notion of an ind-group goes back to \name{Shafarevich} who called these objects infinite dimensional groups (see \cite{Sh66}). 
We refer to \cite{FK} and  \cite[Section 2]{KPZ1}  for basic notions in this context.

\begin{definition}
By an affine \emph{ind-variety}  we mean an injective limit $V=\varinjlim V_i$ of an ascending sequence $V_0 \embed V_1 \embed V_2 \embed \ldots$ 
such that the following holds:
\begin{enumerate}[(1)]
\item $V = \bigcup_{k \in \NN} V_k$;
\item each $V_k$ is an affine algebraic variety;
\item for all $k \in \NN$ the embedding $V_k \embed V_{k+1}$ is closed in the Zariski topology.
\end{enumerate}
\end{definition}

For simplicity we will  call an affine ind-variety  simply an ind-variety.

An ind-variety $V$ has a natural \emph{topology}: a subset $S \subset V$ is called open, resp. closed, if $S_k := S \cap  V_k \subset V_k$ is open, resp. closed, for all $k \in \mathbb{N}$.  A  closed subset $S \subset V$ has a natural structure of an ind-variety and is called an ind-subvariety.

The product of  ind-varieties is defined in the obvious way. 
A \emph{morphism} between ind-varieties $V = \bigcup_k V_k$ and $W = \bigcup_m W_m$ is a map $\phi: V \to W$ such that for every $k \in \mathbb{N}$ there is an $m \in \mathbb{N}$ such that 
$\phi(V_k) \subset W_m$ and that the induced map $V_k \to W_m$ is a morphism of algebraic varieties. 
 This allows us to give the following definition.

\begin{definition}
An ind-variety $G$ is said to be an \emph{ind-group} if the underlying set $G$ is a group such that the map $G \times G \to G$,  $(g,h) \mapsto gh^{-1}$, is a morphism.
\end{definition}

A \emph{closed subgroup}  $H$ of $G$ is a subgroup that is also a closed subset. Then $H$ is again an ind-group with respect to the induced ind-variety structure.
 A closed subgroup $H$ of an ind-group $G = \varinjlim G_i$ is called an \emph{algebraic subgroup} if $H$ is contained in $G_i$ for some $i$.  From \cite[Proposition 5.6.5(1)]{FK} it follows  that algebraic subgroups of $\Aut(X)$ are exactly algebraic groups that act regularly on $X$.

The next result can be found in   \cite[Section 5]{FK} and \cite[Section 2]{KPZ1}.
\begin{proposition}\label{ind-group}
Let $X$ be an affine variety. Then $\Aut(X)$ has the structure of an ind-group such that  a regular action of an algebraic group $G$ on $X$ induces an ind-group homomorphism $G \to \Aut(X)$.
\end{proposition}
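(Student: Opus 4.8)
The plan is to fix a closed embedding $X \embed \A^N$, use it to filter $\Aut(X)$ by the degrees of an automorphism \emph{together with} its inverse, and then verify in turn that each filtration piece is an affine variety, that the pieces form an admissible closed filtration independent of the chosen embedding, and finally that a regular $G$-action meets only finitely many pieces and does so regularly in the group parameter. Write $\K[X] = \K[x_1,\dots,x_N]/I$. Every endomorphism $\phi$ of $X$ is recorded by the tuple $(\phi^*x_1,\dots,\phi^*x_N) \in \K[X]^N$, and conversely a tuple $(f_1,\dots,f_N)$ defines a morphism $X \to \A^N$ whose image lies in $X$ exactly when $g(f_1,\dots,f_N)=0$ in $\K[X]$ for each of the finitely many generators $g$ of $I$. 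Letting $\K[X]_{\le d}$ denote the finite-dimensional subspace of classes represented by polynomials of degree $\le d$, the set $\End(X)_{\le d}$ of tuples with all $f_i \in \K[X]_{\le d}$ satisfying these equations is a closed subvariety of the affine space $(\K[X]_{\le d})^N$, since each condition $g(f_1,\dots,f_N)=0$ is polynomial in the coefficients of the $f_i$.

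Next I would equip $\Aut(X)_{\le d} := \{\phi : \deg\phi \le d,\ \deg\phi^{-1}\le d\}$ with an algebraic structure by the graph construction. The locus of pairs $(\phi,\psi) \in \End(X)_{\le d}\times\End(X)_{\le d}$ with $\phi\circ\psi=\psi\circ\phi=\id$ is closed: composing two maps of bounded degree amounts to substituting $(\phi\circ\psi)^*x_i = f_i(\psi^*x_1,\dots,\psi^*x_N)$ and re-expanding modulo $I$, so the coefficients of the composite are polynomials in the coefficients of $\phi$ and $\psi$, and equality with $\id=(x_1,\dots,x_N)$ is a closed condition. Since the inverse of an automorphism is unique, the first projection identifies this closed set bijectively with $\Aut(X)_{\le d}$, and I would transport the structure along $\phi \mapsto (\phi,\phi^{-1})$, realizing $\Aut(X)_{\le d}$ as a closed subvariety. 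The inclusions $\Aut(X)_{\le d} \embed \Aut(X)_{\le d+1}$ are then closed embeddings with $\Aut(X)=\bigcup_d \Aut(X)_{\le d}$, giving the ind-variety structure. Because composition and inversion are recorded by the same polynomial substitution formulas, multiplication $\Aut(X)_{\le d}\times\Aut(X)_{\le d}\to\Aut(X)_{\le d^2}$ and inversion $\Aut(X)_{\le d}\to\Aut(X)_{\le d}$ are morphisms, so $\Aut(X)$ is an ind-group.

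For the second assertion, given a regular action $\mu\colon G\times X\to X$ of an algebraic group $G$, I would expand each $\mu^*x_i \in \K[G]\otimes\K[X]$ as a finite sum $\sum_j a_{ij}\otimes f_{ij}$ with $a_{ij}\in\K[G]$ and $f_{ij}\in\K[X]$. The finitely many $f_{ij}$ have degrees bounded by some $d$, so every $\mu_g = \mu(g,-)$ lies in $\End(X)_{\le d}$; applying the same expansion to the automorphism $\mu_{g^{-1}}=\mu_g^{-1}$, obtained from $\mu$ precomposed with inversion on $G$, bounds $\deg\mu_g^{-1}$ as well. Hence the orbit map factors through a fixed $\Aut(X)_{\le D}$, and the coordinates of $g\mapsto(\mu_g,\mu_{g^{-1}})$ are evaluations of the regular functions $a_{ij}$ at $g$, so this is a morphism of varieties $G \to \Aut(X)_{\le D}$. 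Composing with the closed embedding $\Aut(X)_{\le D}\embed\Aut(X)$ yields the claimed morphism of ind-varieties, which is a group homomorphism because $\mu$ is an action.

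The step I expect to be the main obstacle is the algebraic structure on $\Aut(X)_{\le d}$ and the closedness of the filtration: the point is that the invertible endomorphisms of bounded degree do \emph{not} form a closed subset of $\End(X)_{\le d}$ on their own, since invertibility with a controlled inverse is not a closed condition in $\phi$ alone. This is precisely why one must bound $\phi$ and $\phi^{-1}$ simultaneously and use the symmetric graph locus of pairs $(\phi,\phi^{-1})$, whose projection is then shown to be a closed immersion. A secondary verification, needed for the construction to be canonical, is that the resulting ind-group structure does not depend on the chosen embedding $X\embed\A^N$; this follows because a change of embedding alters degrees only by a bounded factor, so the two filtrations are mutually cofinal and induce the same ind-variety structure.
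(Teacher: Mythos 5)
Your proof is correct, and it is essentially the standard construction from the references \cite{FK} and \cite{KPZ1} that the paper cites for this proposition (the paper gives no proof of its own): filter $\Aut(X)$ by simultaneous degree bounds on $\phi$ and $\phi^{-1}$, realize each piece as the closed locus of pairs $(\phi,\psi)$ with $\phi\circ\psi=\psi\circ\phi=\id$ in $\End(X)_{\le d}\times\End(X)_{\le d}$, and bound degrees of an algebraic group action via the finite decomposition of $\mu^*x_i$ in $\K[G]\otimes\K[X]$. You also correctly identify the key subtlety (invertibility alone is not closed, hence the pair construction) and the embedding-independence issue, both of which are handled exactly this way in the cited sources.
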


\begin{definition}
An element $g \in \Aut(X)$ is called \emph{algebraic} if there is an algebraic subgroup $G \subset \Aut(X)$ such that $g \in G$. 
\end{definition}

\begin{definition}
 An ind-group $G$ is called \emph{nested} if $G=\varinjlim G_i$, where $G_i$ is an algebraic group and $G_i\subset G_{i+1}$ is a closed subgroup for $i=1,2,\ldots$.
\end{definition}
If $G\subset\Aut(X)$ is nested, then there exists a Levi decomposition of the neutral component $G^\circ=L\ltimes U$, where $L$ is a reductive algebraic group and $U$ is a normal ind-subgroup which consists of unipotent elements, see \cite[Theorem 2.11]{KPZ1}. In addition, there exists an algebraic subgroup $H\subset G$ with same orbits on $X$, i.e., $G\cdot x=H\cdot x$ for any $x\in X$, see  \cite[Proposition 2.17]{KPZ1}.

\subsection{Lie algebras of ind-groups}\label{Lie}
For any ind-variety $V = \bigcup_{k \in \mathbb{N}} V_k$ we can define the tangent space in $x \in V$ in the obvious way: we  have  $x \in V_k$ for $k \ge k_0$, and
$T_x V_k \subset T_x V_{k+1}$ for $k \ge k_0$, and then define
$$
  T_x V := \bigcup_{k \ge k_0} T_x V_k,
$$
which is a vector space of countable dimension.

For an  ind-group $G$, the tangent space $T_e G$ has a natural structure of a Lie algebra which is  denoted by $\Lie G$
(see \cite[Section 4]{Ku} and \cite[Section 2]{FK} for details). By $\overline{\Aut_{\text{alg}}(X)} \subset \Aut(X)$ we denote 
the closure of the subgroup $\Aut_{\text{alg}}(X)$
in $\Aut(X)$ generated by all connected algebraic subgroups.
By \cite[Theorem 0.3.2]{FK}  
there is an injective antihomomorphism from the Lie algebra
$\Lie \Aut(X)$ 
into the Lie algebra $\Der({\mathcal O}(X))$ of derivations on $X$.
  From now on,	we  will  always	identify	$\Lie \Aut(X)$ and $\Lie \overline{\Aut_{\text{alg}}(X)}$	with	their	images  in  $\Der({\mathcal O}(X))$.   Note  that $\Lie \overline{\Aut_{\text{alg}}(X)}$ contains all locally finite derivations  because  each such derivation $\delta$ is contained in
$\Lie G$ for some connected algebraic subgroup $G \subset \Aut(X)$.

\section{The case: $\U(X)$ is not abelian}
Provided that the unipotent-generated subgroup  $\U(X)$ is not abelian, by Lemma~\ref{lem:equivalent} there exist non-equivalent $\G_a$-actions on $X$.
 The aim of this section is to prove the following result.
\begin{proposition}\label{nonlocfin}
Assume that an affine variety $X$  admits two non-equivalent $\mathbb{G}_a$-actions.
Then

(1) there exists a derivation  $\partial$ in the linear span of $\LND(X)$  which is not locally finite.

(2) there exists a non-algebraic element in $\U(X)$.
\end{proposition}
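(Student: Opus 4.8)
The plan is to translate the hypothesis into LND language and then produce, from two non-equivalent locally nilpotent derivations, a single explicit combination that fails to be locally finite, together with a group element built by exponentiating its two summands. A pair of non-equivalent $\G_a$-actions corresponds to LNDs $\partial_1,\partial_2\in\LND(X)$ with $\ker\partial_1\neq\ker\partial_2$. The first observation I would record is that the kernels are \emph{incomparable}: both are factorially closed in $\O(X)$ (hence algebraically closed in $\O(X)$) and both have transcendence degree $\dim X-1$, so a proper inclusion $\ker\partial_2\subsetneq\ker\partial_1$ would force any $f\in\ker\partial_1$ to be algebraic over $\Frac\ker\partial_2$ and therefore to lie in $\ker\partial_2$, a contradiction. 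Consequently I may choose $f\in\ker\partial_1\setminus\ker\partial_2$ and $g\in\ker\partial_2\setminus\ker\partial_1$. Since $f\in\ker\partial_1$ and $g\in\ker\partial_2$, both $f\partial_1$ and $g\partial_2$ are again LNDs, so every $\K$-linear combination of them lies in the linear span of $\LND(X)$.

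For part (1), I would set $\delta=f\partial_1+g^k\partial_2$, where $k$ is chosen so that the $\partial_1$-degree $\deg_{\partial_1}(g^k)=k\deg_{\partial_1}(g)$ is at least $2$ (possible because $\deg_{\partial_1}$ is additive on products in the domain $\O(X)$ and $\deg_{\partial_1}(g)\geq 1$). This is modeled on the affine-plane picture $\delta=y\partial_x+x^2\partial_y$, which is homogeneous of degree $+1$ for the weights $w(x)=2,\,w(y)=3$; there $\delta^n(x)$ has strictly increasing weighted degree, so the orbit $\{\delta^n(x)\}$ is linearly independent and $\delta$ is not locally finite. The plan is to reproduce this growth intrinsically: using the filtration of $\O(X)$ by $\deg_{\partial_1}$, the summand $f\partial_1$ strictly lowers $\partial_1$-degree while multiplication by $g^k$ strictly raises it by at least $2$, and I would pin down an element $h$ whose iterated images $\delta^n(h)$ have unbounded $\partial_1$-degree, hence span an infinite-dimensional subspace. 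This exhibits a non-locally-finite derivation in $\Span\LND(X)$.

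For part (2), I would pass from the infinitesimal statement to the group by the same mechanism. The automorphism
\[
\sigma=\exp(f\partial_1)\circ\exp(g^k\partial_2)
\]
is a product of two $\G_a$-action elements, hence $\sigma\in\U(X)$. Iterating $\sigma$ makes the degree of $\sigma^n$ grow without bound (in the plane model the iterates have degree $\sim 2^n$), so $\sigma$ does not act locally finitely on $\O(X)$: there is $h\in\O(X)$ with $\dim\langle\sigma^n(h)\mid n\geq 0\rangle=\infty$. On the other hand, any algebraic element lies in an algebraic subgroup $G\subset\Aut(X)$, and $\O(X)$ is a union of finite-dimensional $G$-stable subspaces (a rational representation), so every algebraic element acts locally finitely. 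Therefore $\sigma$ is a non-algebraic element of $\U(X)$.

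The main obstacle is the coordinate-free degree-growth estimate underlying both parts. On $\A^2$ the growth is transparent via a positive $\ZZ$-grading, but a general affine $X$ need not carry a compatible $\G_m$-action, and the difficulty is that $\partial_2$ need not be homogeneous for the $\deg_{\partial_1}$-filtration, so its effect on $\partial_1$-degree must be controlled by hand. The crux is thus to verify, uniformly in $X$, that the choice of $g^k$ with large $\partial_1$-degree forces the orbit of a well-chosen $h$ to have strictly (or eventually) increasing degree, so that the orbit is genuinely infinite-dimensional; once this estimate is established, both the non-locally-finite derivation of (1) and the non-algebraic automorphism of (2) follow immediately.
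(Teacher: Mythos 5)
Your proposal contains a genuine gap --- one that you yourself flag as ``the crux'' but never close: the degree-growth estimate is not established, and the mechanism you propose for it does not work as stated. You plan to measure growth with the filtration by $\deg_{\partial_1}$ (the largest $n$ with $\partial_1^n h\neq 0$). Multiplication by $g^k$ does raise this degree by exactly $k\deg_{\partial_1}(g)$, but the operator you iterate is $g^k\partial_2$, and $\partial_2$ need not respect the $\partial_1$-filtration at all: for an arbitrary pair of non-equivalent LNDs there is no bound on $\deg_{\partial_1}(\partial_2 h)$ in terms of $\deg_{\partial_1}(h)$, because $[\partial_1,\partial_2]$ and the higher commutators are completely uncontrolled. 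So you can neither assert that the second summand ``raises degree by at least $2$'' nor rule out cancellation between the two summands in $\delta^n(h)$. Moreover, your $f$ and $g$ are chosen only by kernel membership ($f\in\ker\partial_1\setminus\ker\partial_2$, $g\in\ker\partial_2\setminus\ker\partial_1$), which is too little data to force growth: your plane model works precisely because there $\partial_2=\partial/\partial y$ does not increase $x$-degree and the top homogeneous part of $\delta$ with respect to a positive grading has positive coefficients, and neither feature is available on a general affine $X$. The same unproved estimate is what part (2) rests on, so both parts are open in your write-up. (Your preliminary observation that the kernels are incomparable is correct, but it is also weaker than what is needed; the paper instead uses that non-equivalence forces the velocity fields to be linearly independent at a general point.)

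The paper's proof exists exactly to supply this missing ingredient, and it does so by local analysis at a well-chosen point rather than a global filtration. One picks a smooth point $p$ outside the zero loci of the two plinth ideals at which $v_1(p)$ and $v_2(p)$ are linearly independent, passes to the completion $\hat\O_p(X)\cong\K[\![x_1,\ldots,x_n]\!]$ where $\LHC(\hat\partial_i)=\partial/\partial x_i$, and then --- via Lemma~\ref{lem:ker} together with Miyanishi's approximation result, which is where the plinth-ideal condition enters --- produces genuine regular functions $f_i\in\ker\partial_i$ with prescribed lowest-order jets $\LHC(f_1)=x_2^d$, $\LHC(f_2)=x_1^d$ (Lemma~\ref{pr:x1+x2}). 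This pins down $\LHC(\hat\partial)=x_2^d\,\partial/\partial x_1+x_1^d\,\partial/\partial x_2$, and the growth of the order at $p$ of the iterates $\hat\partial^k f$ (respectively, of $(g^*-\id)^i f$ in part (2)) follows from positivity of the coefficients, which excludes cancellation. In short: the correct substitute for your $\deg_{\partial_1}$-filtration is the order filtration in the complete local ring, and the correct substitute for ``any $f,g$ in the respective kernels'' is kernel elements with prescribed lowest-order behavior, obtained by the approximation step. If you repair your argument along these lines, you will have reconstructed the paper's proof.
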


 \begin{remark}
A  variety $X$ as in this Proposition \ref{nonlocfin} cannot be of dimension $\le1$, otherwise all LNDs are equivalent. Thus, $\dim X\ge2$.
\end{remark}
Let $\partial_1$, $\partial_2$ be two  locally  nilpotent  derivations  corresponding to two non-equivalent $\mathbb{G}_a$-actions $H_1, H_2$ on $X$, respectively,
$\p_i=(\Ker\partial_i)\cap(\Im\partial_i)$, $i=1,2$ their plinth ideals, and $v_1,v_2$ their corresponding vector fields.

Let us consider a fibration $X\to {\mathbb A}^1,\, p\mapsto f(p)$ for some  $f\in\ker\partial_1$ such that $f\notin\ker\partial_2$.
Then $H_1$-orbits lie in its fibers, but general $H_2$-orbits do not. 
Hence  $v_1(p)$ and $v_2(p)$ are linearly independent at a general point.
Since $V(\p_1),V(\p_2)$ are proper closed subsets of $X$, we can take a smooth point $p\in X_\reg\setminus(V(\p_1)\cup V(\p_2))$ such that $v_1(p)$ and $v_2(p)$  are linearly independent.

Consider local coordinates $(x_1,\ldots,x_n)$, where $n=\dim X$, at $p$ such that 
\[v_1(p)=(1,0,0,\ldots,0),\quad v_2(p)=(0,1,0,\ldots,0).\]

Let $\m_p$ be the maximal ideal of 
${\mathcal O}(X)$ that corresponds to $p \in X$. We operate in the $\m_p$-adic completion of the local ring at $p$
$$\hat {\mathcal O}_p(X) =  \varprojlim_k {\mathcal O}_p(X)/\m_p^k{\mathcal O}_p(X).$$

 We may assume in this section that 
${\mathcal O}(X)\subset{\mathcal O}_p(X)\subset\hat{\mathcal O}_p(X)$ because ${\mathcal O}_p(X)$ is a localization of ${\mathcal O}(X)$ and there is a canonical embedding ${\mathcal O}_p(X)\subset\hat{\mathcal O}_p(X)$. Moreover, each derivation of ${\mathcal O}(X)$ is uniquely extended to ${\mathcal O}_p(X)$ and each derivation of ${\mathcal O}_p(X)$ is uniquely extended to a derivation of $\hat{\mathcal O}_p(X)$ (see e.g., \cite[Tag 07PE]{Stacks}), so for each $\delta\in\Der {\mathcal O}(X)$ we denote its extension by $\hat\delta\in\Der\hat{\mathcal O}_p(X)$.

Since $p$ is smooth, $\hat {\mathcal O}_p(X) = k[\![x_1,x_2,\ldots,x_n]\!]$
is a formal power series ring (by the Cohen structure theorem, e.g., see  \cite{Co}).
Thus, we have a natural $\ZZ_{\ge0}$-grading on $\hat {\mathcal O}_p(X)$ by the minimum degree, which in turn induces the $\ZZ_{\ge-1}$-grading on $\Der \hat {\mathcal O}_p(X)$ via the formula $\deg \partial =\deg \partial h - \deg h $ for a homogeneous derivation $\partial$ and any homogeneous element $h\in\hat {\mathcal O}_p(X)$. 
Let $f$ be an element of either $\hat {\mathcal O}_p(X)$ or $\Der \hat {\mathcal O}_p(X)$. We denote by $\LHC(f)$ the  homogeneous component of lowest degree and by $f_{(d)}$ the $d$th homogeneous component.
By our convention $\hat\partial_i\in\Der \hat {\mathcal O}_p(X)$ is the derivation induced by $\partial_i$, $i=1,2$.  Since 
$v_1(p) = (1,0,0,...,0)$ and 
$v_2(p) = (0,1,0,...,0)$ indicate the lowest (linear) homogeneous components of $\hat\partial_1,\hat\partial_2$ respectively,  
we have $\LHC(\hat\partial_i)= \frac{\partial}{\partial x_i}$, $i=1,2$.
\begin{lemma}\label{lem:ker}
\begin{enumerate}
\item $\LHC(g)\in\K[\![x_2,\ldots,x_n]\!]$ for any $g\in\ker\hat\partial_1$.
\item The  map $\ker\hat\partial_1\to\K[\![x_2,\ldots,x_n]\!]$ which maps 
$g(x_1,...,x_n) \in \ker\hat\partial_1$  to   $g(0,x_2,...,x_n)$ is an isomorphism of algebras.
\end{enumerate}
The same holds if we switch $x_1$ with $x_2$ and $\hat\partial_1$ with $\hat\partial_2$ respectively.
\end{lemma}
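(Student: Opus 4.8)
The plan is to extract everything from a single leading-term analysis of $\hat\partial_1$. Since $\LHC(\hat\partial_1)=\tfrac{\partial}{\partial x_1}$ is the unique homogeneous component of degree $-1$, I would first write $\hat\partial_1=\tfrac{\partial}{\partial x_1}+D$ with $D=\sum_{j\ge0}D_{(j)}$ collecting the homogeneous components of degree $\ge0$. For part (1), take $g\in\ker\hat\partial_1$, $g\ne0$, and set $d=\ord(g)$. Because $\tfrac{\partial}{\partial x_1}$ lowers degree by one while $D$ does not lower degree, the degree-$(d-1)$ component of $\hat\partial_1 g$ equals $\tfrac{\partial}{\partial x_1}g_{(d)}$; as $\hat\partial_1 g=0$ this gives $\tfrac{\partial}{\partial x_1}\LHC(g)=0$, i.e. $\LHC(g)=g_{(d)}\in\K[\![x_2,\ldots,x_n]\!]$.

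For part (2), note that the stated map is the restriction to $\ker\hat\partial_1$ of the evaluation $\varepsilon\colon g\mapsto g(0,x_2,\ldots,x_n)$, which is an algebra homomorphism, so only bijectivity needs proof. A key observation is that $\varepsilon$ is graded: it kills the monomials divisible by $x_1$ and preserves the degree of the others. Injectivity is then immediate from part (1): if $g\in\ker\hat\partial_1\setminus\{0\}$ with $d=\ord(g)$, then $g_{(d)}$ is free of $x_1$ by (1), so the degree-$d$ part of $\varepsilon(g)$ is $\varepsilon(g_{(d)})=g_{(d)}\ne0$, whence $\varepsilon(g)\ne0$.

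Surjectivity I would prove by an explicit recursion on degree. Given $h=\sum_e h_{(e)}\in\K[\![x_2,\ldots,x_n]\!]$, comparing degree-$(e-1)$ components in $\hat\partial_1 g=0$ yields
\[
  \tfrac{\partial}{\partial x_1}g_{(e)}=-\sum_{j\ge0}D_{(j)}\,g_{(e-1-j)}=:R_{(e-1)},
\]
where the right-hand side depends only on the components $g_{(e')}$ with $e'<e$. One then defines $g_{(e)}$ to be the unique antiderivative of $R_{(e-1)}$ in the variable $x_1$ that vanishes at $x_1=0$, corrected by the term $h_{(e)}$; this antiderivative exists since $\operatorname{char}\K=0$. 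By induction all $g_{(e)}$ are determined, each is homogeneous of degree $e$, and $g=\sum_e g_{(e)}$ is a well-defined element of $\hat\O_p(X)=\K[\![x_1,\ldots,x_n]\!]$ satisfying $\hat\partial_1 g=0$ and $\varepsilon(g)=h$.

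The step requiring the most care is surjectivity: I must check that the recursion is well-founded (each $R_{(e-1)}$ involves only lower-degree data), that each constructed $g_{(e)}$ is genuinely homogeneous of degree $e$ so that $g$ is a bona fide power series, and that characteristic zero is exactly what makes the $x_1$-antiderivative available. The remaining assertion is obtained by interchanging $x_1$ with $x_2$ and $\hat\partial_1$ with $\hat\partial_2$.
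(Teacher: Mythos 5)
Your proof is correct and follows essentially the same route as the paper's: the leading-term observation $\LHC(\hat\partial_1)=\tfrac{\partial}{\partial x_1}$ for part (1), and for part (2) a degree-by-degree recursion in which $g_{(e)}$ is recovered as an $x_1$-antiderivative with the $x_1$-free part prescribed by the target series. The only cosmetic difference is that you deduce injectivity directly from part (1) via the evaluation map, whereas the paper folds uniqueness into the recursion itself; both are fine.
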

\begin{proof}
The first assertion is straightforward:
\[\hat\partial_1g=0 \implies \frac{\partial\LHC(g)}{\partial x_1}=0 \implies \LHC(g)\in \K[\![x_2,\ldots,x_n]\!].\]

The second assertion is that for any $g_0\in\K[\![x_2,\ldots,x_n]\!]$ there exists a unique 
element $g\in\Ker\hat\partial_1$ such that $g_0=g(0,x_2,\ldots,x_n)$. 
Let us split the equation $\hat\partial_1 g=0$ into homogeneous parts:
\[0=(\hat\partial_1 g)_{(k)}=(\hat\partial_1   g_{(0)}+\ldots+\hat\partial_1  g_{(k)})_{(k)}+\frac{\partial}{\partial x_1} g_{(k+1)}, \qquad k=0,1,\ldots\]
Thus, $\frac{\partial}{\partial x_1} g_{(k+1)}=-\sum_{i=0}^k 
(\hat\partial_1 g_{(i)})_{(k)}$, and $g_{(k+1)}$ is uniquely determined by lower homogeneous components up to $x_1$-free monomials. 
But the $x_1$-free monomials of $g$ comprise exactly $g(0,x_2,\ldots,x_n)$.
Thus, all homogeneous components of $g$ are uniquely constructed by induction on the degree from the $x_1$-free part $g(0,x_2,\ldots,x_n)=g_0$.

The statement for $\hat\partial_2$ is analogous.
\end{proof}

 \begin{lemma}\label{pr:x1+x2}
For any $d>1$ there are elements $f_i\in\ker\partial_i$, $i=1,2$ such that $\partial=f_1\partial_1+f_2\partial_2\in\u(X)$ satisfies
\[\LHC(\hat \partial)=x_2^d  \frac{\partial}{\partial x_1} + x_1^d \frac{\partial}{\partial x_2}.\]
\end{lemma}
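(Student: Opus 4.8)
The plan is to construct the two functions separately and then recover $\LHC(\hat\partial)$ by a degree count. It suffices to find $f_1\in\ker\partial_1$ with $\LHC(\hat f_1)=x_2^d$ and $f_2\in\ker\partial_2$ with $\LHC(\hat f_2)=x_1^d$. Indeed, since $\LHC(\hat\partial_1)=\frac{\partial}{\partial x_1}$ we may write $\hat\partial_1=\frac{\partial}{\partial x_1}+D_1$ with $\deg D_1\ge0$, whence every term of $\hat f_1\hat\partial_1$ other than $x_2^d\frac{\partial}{\partial x_1}$ has degree $>d-1$; thus $\LHC(\hat f_1\hat\partial_1)=x_2^d\frac{\partial}{\partial x_1}$, of degree $d-1$, and symmetrically $\LHC(\hat f_2\hat\partial_2)=x_1^d\frac{\partial}{\partial x_2}$, also of degree $d-1$. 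As these two homogeneous derivations are linearly independent, they do not cancel, so they sum to the degree-$(d-1)$ component of $\hat\partial=\hat f_1\hat\partial_1+\hat f_2\hat\partial_2$, while all remaining terms have strictly larger degree; this gives the asserted $\LHC(\hat\partial)$. Finally $f_i\partial_i$ is locally nilpotent because $f_i\in\ker\partial_i$, so $\partial=f_1\partial_1+f_2\partial_2$ lies in the Lie algebra $\u(X)$.

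To produce $f_1$, I would first find $g\in\ker\partial_1$ with $g(p)=0$ whose germ $\hat g$ has linear part $x_2$, and then set $f_1:=g^d\in\ker\partial_1$; since $\hat g=x_2+(\deg\ge2)$ this gives $\hat f_1=x_2^d+(\deg\ge d+1)$, i.e. $\LHC(\hat f_1)=x_2^d$. The existence of such a $g$ is exactly where the hypothesis $p\notin V(\p_1)$ enters. Pick $a_1\in\p_1=\ker\partial_1\cap\Im\partial_1$ with $a_1(p)\ne0$ and write $a_1=\partial_1 b_1$; then $t_1:=b_1/a_1$ is a local slice ($\partial_1 t_1=1$) on $X_{a_1}=\{a_1\ne0\}$, so by the slice theorem $\O(X_{a_1})=(\ker\partial_1)_{a_1}[t_1]$ and the $\G_a$-action is trivial over $X_{a_1}$. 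In particular the quotient $X_{a_1}\to\Spec((\ker\partial_1)_{a_1})$ is smooth at the smooth point $p$, its fibre being the $1$-dimensional orbit with tangent line $\langle v_1(p)\rangle=\langle\frac{\partial}{\partial x_1}\rangle$. Dualizing, the differentials at $p$ of functions in $(\ker\partial_1)_{a_1}$ fill the annihilator of $v_1(p)$, namely $\langle x_2,\ldots,x_n\rangle$; choosing one with differential $x_2$ and clearing the denominator $a_1^k$ (harmless at $p$ since $a_1(p)\ne0$ and the function vanishes at $p$) produces the desired honest $g\in\ker\partial_1$ after rescaling by the constant $a_1(p)^{-k}$.

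The construction of $f_2$ is entirely symmetric: exchanging $x_1\leftrightarrow x_2$ and $\partial_1\leftrightarrow\partial_2$ and invoking $p\notin V(\p_2)$ yields $h\in\ker\partial_2$ with $h(p)=0$ and linear part $x_1$, and $f_2:=h^d$ then satisfies $\LHC(\hat f_2)=x_1^d$. Combined with the degree count of the first paragraph, this proves the lemma.

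I expect the main obstacle to be the passage from the formal to the regular setting. Lemma~\ref{lem:ker} shows that the \emph{formal} kernel $\ker\hat\partial_1$ is all of $\K[\![x_2,\ldots,x_n]\!]$ and so trivially contains an element with leading term $x_2^d$; the real content is that an \emph{honest} regular function in $\ker\partial_1\subset\O(X)$ can be chosen with prescribed linear part at $p$. This is precisely what the local triviality of the action near $p$---equivalently the smoothness of the quotient at $p$, guaranteed by $p\notin V(\p_1)$---delivers, and everything else reduces to bookkeeping with the $\ZZ_{\ge-1}$-grading on $\Der\hat\O_p(X)$.
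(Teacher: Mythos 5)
Your proof is correct, but it reaches the key step --- producing \emph{regular} (not just formal) kernel elements with prescribed lowest term --- by a genuinely different route than the paper. The paper first solves the problem formally: its Lemma~\ref{lem:ker} gives $g_1\in\ker\hat\partial_1$ with $\LHC(g_1)=x_2^d$, and then it invokes Miyanishi's result \cite[Lem.~3.2]{Mi}, which says that since $\p_1\nsubseteq\m_p$ the formal kernel $\ker\hat\partial_1$ is the $(\m_p\cap\ker\partial_1)$-adic completion of $\ker\partial_1$; hence $\ker\partial_1$ and $\ker\hat\partial_1$ have the same image in $\O_p(X)/\m_p^{d+1}$, and the formal element can be replaced by an honest $f_1\in\ker\partial_1$ with the same lowest term. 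You instead use the plinth hypothesis to get $a_1\in\p_1$ with $a_1(p)\neq0$, hence a local slice $t_1=b_1/a_1$, apply the slice theorem $\O(X)_{a_1}=(\ker\partial_1)_{a_1}[t_1]$, and conclude by a cotangent-space count that differentials at $p$ of elements of $(\ker\partial_1)_{a_1}$ fill the annihilator of $v_1(p)$; clearing denominators gives $g\in\ker\partial_1$ with linear part $x_2$, and $f_1:=g^d$ does the job. Your route is more elementary and self-contained: it avoids both Lemma~\ref{lem:ker} and the citation of Miyanishi, at the price of only producing lowest terms that are $d$-th powers of linear forms --- but that is all this lemma (and its use in Proposition~\ref{nonlocfin}) requires, whereas the paper's argument can prescribe an arbitrary jet of a kernel element. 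Your opening degree count (that $\LHC(\hat f_i\hat\partial_i)$ has degree $d-1$ and the two pieces cannot cancel) is the same implicit computation the paper hides behind ``the statement follows,'' and your observation that $f_i\partial_i$ is locally nilpotent because $f_i\in\ker\partial_i$ correctly settles membership in $\u(X)$. Two small polish points: the annihilator of $v_1(p)$ is spanned by the differentials $dx_2,\dots,dx_n$, not by the functions $x_2,\dots,x_n$; and the smoothness-descent step for $\Spec((\ker\partial_1)_{a_1})$ can be bypassed entirely --- since $\O(X)_{a_1}=(\ker\partial_1)_{a_1}[t_1]$, the differentials at $p$ of kernel elements together with $d_pt_1$ span the full $n$-dimensional cotangent space, and the former all kill $v_1(p)$ while $d_pt_1(v_1(p))=1$, so the kernel differentials span exactly the $(n-1)$-dimensional annihilator.
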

\begin{proof}
By Lemma~\ref{lem:ker}, we may take $g_1\in\ker\hat\partial_1$ such that $\LHC(g_1)=x_2^d$.
Since $\p_1\nsubseteq\m_p$, by \cite[Lem.~3.2]{Mi},
$\ker\hat\partial_1$ equals the $(\m_p\cap\ker\partial_1)$-adic completion of $\ker\partial_1.$
Thus, the images of $\ker\partial_1$ and $\ker\hat\partial_1$ in $\hat{\mathcal O}_p(X)/\hat\m_p^{d+1}={\mathcal O}_p(X)/\m_p^{d+1}{\mathcal O}_p(X)$  coincide, where $\hat\m_p=\m_p\hat{\mathcal O}_p(X)$.
Therefore, there exists $f_1\in\ker\partial_1$ such that $\LHC(f_1)=\LHC(g_1)=x_2^d$.
Analogously, there exists $f_2\in\ker\partial_2$ such that $\LHC(f_2)=x_1^d$.
The statement follows.
\end{proof}

\begin{proof}[Proof of Proposition \ref{nonlocfin}]
(1) Let us take a derivation $\partial$ as in Lemma~\ref{pr:x1+x2} for $d=2$, i.e., $\LHC(\hat\partial)=x_2^2  \frac{\partial}{\partial x_1} + x_1^2 \frac{\partial}{\partial x_2}$. It is enough to prove that
$\partial$ is not locally finite.
Let $f\in{\mathcal O}(X)$ be such that $\LHC(f)=x_1+x_2$.
Then for each $k\ge 1$
\[\LHC(\hat\partial^{k-1} f)=\sum_{i=0}^k c_{k,i} x_1^ix_2^{k-i},\]
where 
$c_{k,i}\in\ZZ_{\ge 0}$ 
and $\sum_{i = 0}^k c_{k,i} > 0$. Thus, $\ord \partial^{k-1} f=k$, hence a sequence $\{\partial^k f\mid k=0,1,\ldots\}$  spans an infinite-dimensional subspace of ${\mathcal O}(X)$.

(2) 
In terms of Lemma~\ref{pr:x1+x2}, let \[g = \exp(f_1\partial_1)\circ\exp(f_2\partial_2).\]
Then $g$ belongs to $\U(X)$,  fixes $p$ and induces an automorphism $g^*$
 of $\hat{\mathcal O}_p(X)$ that preserves the subalgebra ${\mathcal O}(X)$. 
 A direct calculation shows that the linear operator $h=g^*-\mathrm{id}\in\End\hat{\mathcal O}_p(X)$  satisfies the following equality: \[\LHC(h(x_1^{a_1}x_2^{a_2}))=  a_1 x_1^{a_1 - 1} x_2^{d + a_2} + a_2 x_1^{d + a_1}x_2^{a_2-1},\]
where  $x_i^{-1}$ is zero by definition, $i = 1,2$.
 Moreover, $h(x_i)$ for $i>2$ is of degree at least $d+1$, if nonzero.
Hence, for a given  $f\in \hat{\mathcal O}_p(X)$ such that $\LHC(f)=P(x_1,x_2)$ is a polynomial of degree $s>0$ with positive integer coefficients,  $\LHC(h(f))$ is again a polynomial in $x_1,x_2$ of degree $s+d-1$ with positive integer coefficients.

  Let us take $f\in{\mathcal O}(X)$ such that $\LHC(f)=x_1$
and let $F \subset{\mathcal O}(X)$ be a minimal subspace that contains $f$ and is $h$-stable. Since $h^i(f) \in F$  and 
$\deg(\LHC(h^i(f))) = 1+i(d-1)$ for any $i\in\ZZ_{\ge0}$,
$F$ is infinite-dimensional. We claim that $g$ is not algebraic. Indeed,  if $g$ were algebraic, then $g^*$ would act locally finitely on ${\mathcal O}(X)$, and so would $h$.  The claim follows.
\end{proof}

\begin{example}
 By Jung--Van der Kulk's theorem, the automorphism group of the affine plane $X={\mathbb A}^2$ equals the amalgamated product
\[\Aut({\mathbb A}^2)=\Aff({\mathbb A}^2)\star_{C}\Aut_{\pi_1}({\mathbb A}^2),\]
where $\Aff({\mathbb A}^2)$ is the subgroup of affine transformations,
\[
\Aut_{\pi_1}({\mathbb A}^2) = \{(x,y)\mapsto (ax+P(y),by+c)\mid, a,b\in\K^\times,c\in\K, P\in\K[y]\}
\]
is the subgroup preserving the projection $\pi_1\colon{\mathbb A}^2\to{\mathbb A}^1,\;(x,y)\mapsto x,$ and $C$ is their intersection. Thus, if $u\in \Aut_{\pi_1}({\mathbb A}^2)\setminus C$ and  $g\in\Aff({\mathbb A}^2)\setminus C$, then $u\cdot gug^{-1}$ is a product of two unipotent elements which is not algebraic.
\end{example}
\begin{remark}
There exists an affine surface $X$  (see \cite{BD}) with the \emph{huge} automorphism group, i.e., such that $\Aut(X)/\Aut_{\alg}(X)$ is not countably generated. 
We believe that $\Aut^\circ(X)/\Aut_{\alg}(X)$ is uncountably generated as well.
\end{remark}

\section{The case: 
$\U(X)$
is abelian }
We denote $\g=\Lie\Aut(X) \subset \Der({\mathcal O}(X))$.
The following lemma is  well known  and appeared in similar form in \cite[Lemma 3.1]{FZ-uniqueness} and \cite[Theorem 2.1]{AG}.
\begin{lemma}\label{l:hom-lnd}
Assume that $\g$ is $\ZZ^r$-graded for $r>0$ and
consider a locally finite element $z\in\g$ that does not belong to the zero component $\g_0$. Then 
there exists a locally nilpotent homogeneous component of $z$ of non-zero weight.
\end{lemma}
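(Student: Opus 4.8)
The plan is to reduce the $\ZZ^r$-grading to a $\ZZ$-grading by a generic linear functional and then to show that the top homogeneous component of $z$ with respect to that $\ZZ$-grading is locally nilpotent. Throughout I use that the $\ZZ^r$-grading of $\g\subset\Der(\O(X))$ is induced by a compatible $\ZZ^r$-grading $\O(X)=\bigoplus_{v}\O(X)_v$, so that a homogeneous derivation of weight $w$ maps $\O(X)_v$ into $\O(X)_{v+w}$, and in particular the homogeneous components $z_w$ of $z$ lie in $\g_w\subset\g$.

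First I would record that $z$ has only finitely many nonzero homogeneous components; let $S\subset\ZZ^r$ be this finite set of weights. Since $z\notin\g_0$, there is some $w^*\in S$ with $w^*\neq0$. I would then choose a linear functional $\ell\colon\ZZ^r\to\ZZ$ lying in the open half-space $\{\ell(w^*)>0\}$ and avoiding the finitely many hyperplanes $\{\ell(v)=\ell(v')\}$ for distinct $v,v'\in S$; such an $\ell$ exists because finitely many hyperplanes cannot cover a half-space. Then $\ell$ is injective on $S$, and $m_{\max}:=\max_{w\in S}\ell(w)\geq\ell(w^*)>0$ is attained at a unique weight $w_0\in S$, with $w_0\neq0$ since $\ell(w_0)=m_{\max}>0=\ell(0)$. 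Refining the grading along $\ell$ yields a $\ZZ$-grading $\O(X)=\bigoplus_m\O(X)^{[m]}$, and $z_{w_0}$ is precisely the top $\ZZ$-degree component of $z$, of degree $m_{\max}$.

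The core of the argument is to show that $z_{w_0}$ is locally nilpotent. For homogeneous $h\in\O(X)^{[d]}$ I would expand $z^N h$ into homogeneous pieces: each summand $z_{i_N}\cdots z_{i_1}h$ has $\ZZ$-degree $d+\sum_j\ell(w_{i_j})\leq d+Nm_{\max}$, with equality forced, by uniqueness of the maximizer $w_0$, only for the single summand $z_{w_0}^N h$. Hence the degree-$(d+Nm_{\max})$ component of $z^N h$ equals $z_{w_0}^N h$ with no cancellation. Local finiteness of $z$ makes $\K[z]h$ finite-dimensional, so only finitely many $\ZZ$-degrees occur among its elements; choosing $N$ with $d+Nm_{\max}$ beyond the largest occurring degree forces $z_{w_0}^N h=0$. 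Since every $f\in\O(X)$ is a finite sum of homogeneous elements and $z_{w_0}$ kills each of them after finitely many steps, $z_{w_0}$ is locally nilpotent on all of $\O(X)$. As $z_{w_0}\in\g_{w_0}$ is a homogeneous component of $z$ of nonzero weight $w_0$, this is the derivation we seek.

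The main obstacle is the no-cancellation bookkeeping in the top degree, which is exactly what dictates the choice of $\ell$: injectivity on $S$ guarantees a \emph{unique} maximizer so that the top-degree term of $z^N h$ is a single monomial $z_{w_0}^N h$, and placing $\ell$ in the half-space $\{\ell(w^*)>0\}$ guarantees that the extremal degree is genuinely positive and its weight nonzero. Once this is arranged, the passage from local finiteness to nilpotency of the extremal component is immediate, and the extension from homogeneous elements to arbitrary $f\in\O(X)$ is routine.
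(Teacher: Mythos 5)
Your proof is correct and takes essentially the same approach as the paper: the paper's (sketched) argument picks a non-zero vertex of the convex hull $P(z)$ of the weights of $z$, and your unique maximizer $w_0$ of a generic linear functional $\ell$ with $\ell(w^*)>0$ is exactly such a vertex. The no-cancellation bookkeeping and the degree-growth contradiction with local finiteness that you spell out are precisely the ``details left to the reader'' in the paper.
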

\begin{proof}
Let us take the convex hull $P(z)\subset \ZZ^r\otimes\QQ$ of component weights of $z$.
Then for any non-zero vertex $v\in P(z)$ the corresponding homogeneous component is locally nilpotent.
The details are left to the reader.
\end{proof}

In this section we assume that $\U(X)$ is abelian. 
The next lemma is an adaptation of \cite[Lemma~3.6]{FZ-uniqueness} for locally finite elements.
\begin{lemma}\label{lem:semisimple}
Let $\delta$ be a locally finite derivation, and $\partial$ be a locally nilpotent derivation.
If $\U(X)$ is abelian, then $\delta-\partial$ is locally finite. \end{lemma}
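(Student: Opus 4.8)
The plan is to combine the Jordan decomposition of the locally finite derivation $\delta$ with the observation that, once $\U(X)$ is abelian, the Lie algebra $\u(X)$ consists \emph{entirely} of locally nilpotent derivations. I first record this structural consequence of the hypothesis. By Lemma~\ref{lem:equivalent}, abelianness of $\U(X)$ means that all LNDs on $X$ are equivalent, i.e.\ they share a single kernel $A\coloneq\ker\partial$. Since a replica $f\partial$ with $f\in A$ is again locally nilpotent, and every LND equals such a replica, the set of LNDs is exactly $\{f\partial : f\in A\}$; this set is already a $\K$-vector space, so $\u(X)=A\partial$ and every nonzero element of $\u(X)$ is an LND.

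Next I would take the semisimple--nilpotent decomposition $\delta=\delta_s+\delta_n$, with $\delta_s$ semisimple, $\delta_n$ locally nilpotent, $[\delta_s,\delta_n]=0$, and both locally finite. Because $\delta_n$ is an LND and all LNDs are equivalent, $\delta_n=a\partial$ for some $a\in A$, whence
\[\delta-\partial=\delta_s+(\delta_n-\partial)=\delta_s+(a-1)\partial,\]
and $\partial'\coloneq(a-1)\partial$ is again locally nilpotent since $a-1\in A=\ker\partial$. Thus it suffices to prove that $\delta_s+\partial'$ is locally finite, which reduces the claim to the case of a semisimple derivation together with an arbitrary LND $\partial'\in\u(X)$.

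For the reduced statement I would exhibit an algebraic subgroup whose Lie algebra contains $\delta_s+\partial'$. The eigenvalues of $\delta_s$ on finitely many generators of $\O(X)$ generate a finitely generated, hence free, subgroup $\Gamma\cong\ZZ^r$ of $(\K,+)$, and the associated eigenspace decomposition is a $\Gamma$-grading realized by an algebraic torus action $T\cong\G_m^r$ with $\delta_s\in\mathfrak t\coloneq\Lie T$. Conjugation by $T$ sends LNDs to LNDs, hence preserves $\u(X)$ and acts rationally there, so $W\coloneq\Span\{\operatorname{Ad}(t)\partial' : t\in T\}$ is a finite-dimensional $T$-submodule of $\u(X)$ containing $\partial'$. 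Decomposing $W$ into $T$-weight spaces yields a basis of homogeneous elements of $\u(X)=A\partial$, each of which is an LND (alternatively, their local nilpotence follows from Lemma~\ref{l:hom-lnd}); these basis LNDs pairwise commute because $\u(X)$ is abelian, so $W$ integrates to a commutative unipotent subgroup $U\cong\G_a^{\dim W}$ of $\Aut(X)$ normalized by $T$, the torus scaling each weight vector. Hence $T\ltimes U$ is an algebraic subgroup of $\Aut(X)$ with $\Lie(T\ltimes U)=\mathfrak t\ltimes W\ni\delta_s+\partial'$, and since every element of the Lie algebra of an algebraic subgroup acts locally finitely on $\O(X)$, the derivation $\delta_s+\partial'=\delta-\partial$ is locally finite.

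I expect the main obstacle to lie in the bookkeeping of the third paragraph: verifying that the eigenvalue group of $\delta_s$ is finitely generated so that $\delta_s$ genuinely integrates to a torus, that the $T$-action on $\u(X)$ is rational so that $W$ is finite-dimensional, and that the weight vectors of $W$—which a priori are only homogeneous components—are honest LNDs. This last point is exactly where the hypothesis is used decisively, through the identity $\u(X)=A\partial$; without abelianness these homogeneous components need not be locally nilpotent, and the semidirect product $T\ltimes U$ would fail to exist.
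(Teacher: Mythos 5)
Your proof is correct, but its core mechanism is genuinely different from the paper's. Both arguments begin with the Jordan decomposition $\delta=\delta_s+\delta_n$, pass to the torus grading attached to $\delta_s$, and exploit abelianness of $\U(X)$ through Lemma~\ref{lem:equivalent}; but at the decisive step the paper stays entirely at the level of derivations, while you integrate to algebraic groups. Concretely, the paper writes the weight components of $\partial$ as $\partial_\chi=c_\chi\partial_v$ (using Lemma~\ref{l:hom-lnd} to produce one locally nilpotent component $\partial_v$ of non-zero weight), solves $[\delta,\partial']=\partial-\partial_0$ explicitly by dividing by the eigenvalues $\chi(\delta_s)$, and then invokes the conjugation identity $\exp(\partial')\,\delta\,\exp(-\partial')=\delta-\partial+\partial_0$ from \cite[Lemma~2.4]{FZ-uniqueness}, finishing by subtracting the commuting summand $\partial_0$. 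You instead reduce to $\delta_s+\partial'$ with $\partial'\in\u(X)$ and exhibit an algebraic subgroup $T\ltimes U\subset\Aut(X)$ with $\delta_s+\partial'\in\Lie(T\ltimes U)$, where $U=\exp(W)$ integrates the finite-dimensional $T$-stable span $W$ of pairwise commuting LNDs. Your route avoids both the division by $\chi(\delta_s)$ and the appeal to \cite{FZ-uniqueness}, and it anticipates exactly the construction $G_i=\TT\ltimes \U_i$ that the paper deploys only later, in the proof of Theorem~\ref{th:alg}; the price is the ind-group bookkeeping you yourself flag (that $W$ exponentiates to an algebraic subgroup, that $TU$ is algebraic in $\Aut(X)$, and that Lie algebras of algebraic subgroups act locally finitely on $\O(X)$) --- all standard facts that the paper relies on elsewhere (e.g., in Proposition~\ref{ind-group} and the proofs of Proposition~\ref{Prop4.4} and Theorem~\ref{th:alg}).

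Two minor inaccuracies, neither of which breaks the argument. First, equivalence of LNDs via \cite[Principle~12]{Fr} only yields $\partial_1=c\partial_2$ with $c\in\Frac\ker\partial_2$, so the correct description is $\u(X)=\{c\partial \mid c\in\Frac(\ker\partial),\ c\partial(\O(X))\subset\O(X)\}$ rather than $(\ker\partial)\cdot\partial$; since $(c\partial)^n f=c^n\partial^n f$, every such element is still an LND, which is all your argument actually needs. Second, your parenthetical fallback is not valid: Lemma~\ref{l:hom-lnd} certifies local nilpotence only of the components sitting at non-zero \emph{vertices} of the weight polytope, not of all weight components of $W$, so the justification via ``every element of $\u(X)$ is an LND'' is the one that must carry the argument --- and it does.
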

\begin{proof}
Since $\delta \in \Der({\mathcal O}(X))$ is a locally finite element, there is the Jordan decomposition
into a sum of a locally nilpotent element $\delta_n$ and a semisimple element $\delta_s$ that belongs to the Lie algebra of some torus $T$,  e.g., see \cite[Section 2]{FZ-uniqueness} or \cite[Prop. 7.6.1]{FK}. 
The character lattice $M\cong \ZZ^r$ of $T$ induces an $M$-grading ${\mathcal O}(X)=\bigoplus_{\chi\in M} {\mathcal O}(X)_\chi$. 
The map $\chi\colon T\to\K^\times$ induces the tangent map $\Lie T\to\K$, which we denote by the same letter.
So, $\delta_s a=\chi(\delta_s) a$  for $a\in {\mathcal O}(X)_\chi$. Consider the homogeneous decomposition of $\partial$ 
with respect to this grading, i.e., 
$\partial = \sum_{\chi \in M} \partial_\chi$, where $[\delta_s,\partial_\chi] = \chi(\delta_s) \partial_\chi$;  $\chi$ is called the degree of $\partial_\chi$. 
Note that 
$[\delta,\partial']=[\delta_s,\partial']$ for any LND $\partial'$, since $[\delta_n,\partial']=0$.

If $\partial = \partial_0$, then 
$[\delta,\partial] = 0$ and the difference of two commuting locally finite derivations $\delta - \partial$ is again locally finite.
If $\partial \neq \partial_0$, then 
by Lemma~\ref{l:hom-lnd}, there exists a locally nilpotent homogeneous component $\partial_v$ of $\partial$, $v \neq0$. 
By Lemma~\ref{lem:equivalent}, 
 for each $\chi\in M$ we have
$\partial_\chi=c_\chi\partial_v$ for some $c_\chi$ from the field of fractions of $\ker\partial$; thus, $c_\chi$ is a homogeneous rational function of  degree $\chi-v$.

So, 
\[ \quad
[\delta,\partial]=[\delta_s,\sum_{\chi \in M}\partial_\chi]=\sum_{\chi \in M}\chi(\delta_s)\partial_\chi =   
\left(\sum_{\chi \in M}\chi(\delta_s)c_\chi\right)\partial_v.
\]
Taking $\partial'=\sum_{\chi\neq 0}\frac{c_\chi}{\chi(\delta_s)}\partial_v$,
we have  $[\delta,\partial']= \sum_{\chi\neq0} c_\chi\partial_v=\partial-\partial_0$, where the zero component $\partial_0= c_0 \partial_{v}$ might be trivial.

Derivations $[\delta,\partial']$ and $\partial'$ are locally nilpotent, hence commute. 
Thus, applying \cite[Lemma~2.4]{FZ-uniqueness} to $\delta$ and $-\partial'$, we conclude that
$\delta - \partial+\partial_0=\exp(\partial')  \delta \exp(-\partial')$ is locally finite.
Since $\partial_0$ commutes with both
$\delta$ and $\partial-\partial_0$,
 the difference of locally finite elements $\partial_0$ and $\delta-\partial+\partial_0$ is again locally finite.
The claim follows.
\end{proof}

Recall that $\u=\langle\partial\mid\partial\in\LND(X)\rangle$
is the Lie subalgebra of $\Der({\mathcal O}(X))$ generated by LNDs. By  $\t$ we denote  the Lie algebra of a maximal subtorus $\TT\subset \Aut(X)$.

\begin{proposition}\label{th:lie}
If $\U(X)$ is abelian, then every locally finite derivation on $X$ belongs to the semidirect product of $\t$ and $\u$.
\end{proposition}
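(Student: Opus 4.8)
The plan is to reduce, via the Jordan decomposition, to the semisimple part of the derivation, and then to peel off its nonzero-weight components one at a time into $\u$ using Lemma~\ref{lem:semisimple} and Lemma~\ref{l:hom-lnd}, until only a part centralizing $\TT$ remains, which maximality of $\TT$ forces into $\t$. First I would write a locally finite derivation $\delta$ as $\delta=\delta_s+\delta_n$ with $\delta_n$ locally nilpotent and $\delta_s$ semisimple, the latter lying in the Lie algebra of a subtorus of $\Aut(X)$. Since $\delta_n\in\LND(X)\subseteq\u$, it suffices to prove $\delta_s\in\t+\u$. I would also record that $\t+\u$ is a Lie subalgebra: because $\U(X)$ is abelian, all LNDs are equivalent (Lemma~\ref{lem:equivalent}) and hence $\u$ is abelian, while $\TT$ permutes this single equivalence class by conjugation, so that $[\t,\u]\subseteq\u$; thus $\t+\u=\t\ltimes\u$.

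Next, since $\TT$ acts locally finitely on $\g=\Lie\Aut(X)$, the character lattice $M\cong\ZZ^r$ of $\TT$ grades $\g$, and each element is a \emph{finite} sum of weight components. I would now peel $\delta_s$ by induction on the support of its weight decomposition. If $\delta_s$ lies in the zero component, it commutes with $\TT$ and we stop. Otherwise Lemma~\ref{l:hom-lnd} produces a nonzero-weight vertex component $(\delta_s)_v$ which is locally nilpotent, hence an element of $\u$; by Lemma~\ref{lem:semisimple} the difference $\delta_s-(\delta_s)_v$ is again locally finite, and its weight support is that of $\delta_s$ with the vertex $v$ removed, hence strictly smaller. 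Iterating finitely many times yields $\delta_s=D_0+u$ with $u\in\u$ and $D_0\in\g$ locally finite, lying in the zero weight component, i.e. commuting with $\TT$.

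Finally I would analyze $D_0$. As $D_0$ commutes with $\TT$, so do both parts of its Jordan decomposition $D_0=(D_0)_s+(D_0)_n$. The nilpotent part $(D_0)_n$ is an LND, hence in $\u$. The semisimple part $(D_0)_s$ is semisimple and commutes with $\TT$, so the subtorus it generates together with $\TT$ is again a torus containing $\TT$; by maximality of $\TT$ it coincides with $\TT$, whence $(D_0)_s\in\t$. Therefore $D_0\in\t+\u$ and $\delta_s=D_0+u\in\t+\u$, giving $\delta\in\t\ltimes\u$ as desired.

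The step I expect to be the main obstacle is this last one: justifying rigorously that a locally finite semisimple derivation commuting with the maximal subtorus $\TT$ must lie in $\t$ --- that is, that its generated subtorus and $\TT$ genuinely form a single torus inside the ind-group $\Aut(X)$, so that maximality applies, together with the fact that the Jordan components of $D_0$ are again locally finite and inherit commutation with $\TT$. A secondary technical point is verifying that $\TT$ acts locally finitely on $\g$, so that all weight supports are finite and the inductive peeling terminates.
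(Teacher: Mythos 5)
Your proof is correct and takes essentially the same route as the paper: the paper also grades $\g$ by the character lattice of the maximal torus, peels off locally nilpotent nonzero-weight vertex components via Lemma~\ref{l:hom-lnd} and Lemma~\ref{lem:semisimple} by induction on the number of homogeneous components, and settles the zero-weight residue by Jordan decomposition together with maximality of $\TT$, exactly as in your final step. Your initial reduction to the semisimple part $\delta_s$ is redundant (the peeling argument applies directly to the locally finite derivation $\delta$), but harmless, and your two flagged ``obstacles'' are precisely the standard facts the paper also relies on implicitly.
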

\begin{proof}
First note that any locally finite derivation 
on $X$ belongs to $\g = \Lie \Aut(X)$.
Now, the adjoint action of $\t$ on $\g$ induces a grading on $\g$ by the character lattice $M\cong\ZZ^r$, which we fix. We
proceed by induction on the number of homogeneous components of $z$. 
If $z\in \g_0$, then $z$ commutes with $\t$. Thus, the semisimple part $z_s$ commutes with $\t$ and  due to the maximality of $\TT$, $z_s$ belongs to $\t$. Therefore, $z=z_s+z_n$ belongs to the semidirect product of $\t$ and $\u$.
If $z\notin\g_0$, then there exists a locally nilpotent homogeneous component $z_v$ of $z$ (see Lemma~\ref{l:hom-lnd}).
Hence, $z-z_v$ is locally finite by Lemma~\ref{lem:semisimple}, which belongs to
the semidirect product of $\t$ and $\u$  by the induction hypothesis. Therefore, $z=(z-z_v)+z_v$ also belongs to the semidirect product of $\t$ and $\u$.
\end{proof}

\begin{proposition}\label{Prop4.4}
If $\U(X)$ is abelian, then   the group $\Aut_{\alg}(X)$ coincides with $\TT\ltimes\U(X)$ and $\Aut_{\alg}(X)$ is a closed normal subgroup of $\Aut^\circ (X)$.
\end{proposition}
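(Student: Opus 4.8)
The plan is to prove the two inclusions separately and then address normality and closedness. Set $N:=\TT\ltimes\U(X)$. First note that conjugation by any $\phi\in\Aut(X)$ transforms a $\G_a$-action into another $\G_a$-action, so $\U(X)$ is normal in $\Aut(X)$; in particular $\TT$ normalises $\U(X)$ and $N$ is a genuine subgroup. Since $\TT$ and every $\G_a$ are connected algebraic subgroups, $N\subseteq\Aut_{\alg}(X)$.

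For the reverse inclusion, I would first observe that every connected algebraic subgroup $G\subseteq\Aut(X)$ is solvable. Indeed, a non-solvable $G$ contains a subgroup isomorphic to $\SL_2$ or $\PSL_2$, whose two opposite root subgroups are $\G_a$-subgroups that do not commute; but both would lie in the abelian $\U(X)$, a contradiction. Hence $G=T_G\ltimes U_G$ with maximal torus $T_G$ and unipotent radical $U_G\subseteq\U(X)$, and it suffices to prove $T_G\subseteq N$. As $T_G$ is generated by the images of its cocharacters, it is enough to place each one-parameter subgroup $\lambda\colon\G_m\to T_G$ inside $N$.

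The core step is to conjugate $\lambda$ into $\TT$ by an element of $\U(X)$. Put $\delta=\d\lambda\in\Lie T_G$, a semisimple derivation. By Proposition~\ref{th:lie} we may write $\delta=s+n$ with $s\in\t$ and $n\in\u$, and this sum is unique since $\t\cap\u=0$; because all LNDs are equivalent (Lemma~\ref{lem:equivalent}), $\u$ is abelian and each of its elements is a multiple $c\partial_0$ of a fixed LND with $c\in\ker\partial_0$, hence locally nilpotent. Grading $\g$ by the adjoint $\TT$-action and writing $n=\sum_\chi n_\chi$, set $\partial=\sum_{\chi(s)\neq0}\chi(s)^{-1}n_\chi\in\u$. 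Because $\u$ is abelian, $\exp(\operatorname{ad}_\partial)(\delta)=\delta+[\partial,s]=s+n'$ with $n'=\sum_{\chi(s)=0}n_\chi$. Now $s$ commutes with $n'$ by construction, $s$ is semisimple and $n'$ is locally nilpotent, so $s+n'$ is a Jordan decomposition; as $\exp(\operatorname{ad}_\partial)(\delta)$ is a conjugate of the semisimple $\delta$, uniqueness forces $n'=0$. Thus $u:=\exp(\partial)\in\U(X)$ satisfies $\operatorname{Ad}(u)\delta=s\in\t$, so $\mu:=u\lambda u^{-1}$ is a $\G_m$-action with generator $s\in\Lie\TT$. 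A $\G_m$-action is determined by its infinitesimal generator (which prescribes the weight grading of $\O(X)$), and $\TT$ fixes $s$; hence $\TT$ centralises $\mu(\G_m)$, so $\langle\TT,\mu(\G_m)\rangle$ is a commutative group of semisimple elements, i.e.\ a torus containing $\TT$, and by maximality it equals $\TT$. Therefore $\lambda(\G_m)=u^{-1}\mu(\G_m)u\subseteq\U(X)\,\TT\,\U(X)=N$. This yields $T_G\subseteq N$, hence $\Aut_{\alg}(X)\subseteq N$ and finally $\Aut_{\alg}(X)=\TT\ltimes\U(X)$.

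It remains to treat normality and closedness. Normality is formal: conjugation by any $\phi\in\Aut(X)$ carries connected algebraic subgroups to connected algebraic subgroups, so $\Aut_{\alg}(X)$ is normal in $\Aut(X)$, in particular in $\Aut^\circ(X)$. For closedness I would write $\U(X)=\bigcup_k U_k$ as an increasing union of finite-dimensional abelian unipotent algebraic subgroups, so that $\Aut_{\alg}(X)=\bigcup_k(\TT\ltimes U_k)$ is a nested ind-group with algebraic terms. I expect the real obstacle to lie here: verifying that this presentation exhibits $\Aut_{\alg}(X)$ as a \emph{closed} ind-subgroup of $\Aut(X)$ requires the ind-group machinery of \cite{FK,KPZ1}, and in particular the closedness of $\U(X)$ itself; this ind-theoretic verification, rather than the algebraic conjugation above, is the delicate point.
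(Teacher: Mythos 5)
Your proof of the equality $\Aut_{\alg}(X)=\TT\ltimes\U(X)$ is correct and is in fact more detailed than the paper's own. The paper's argument is essentially one sentence: for a connected algebraic subgroup $G\subset\Aut^\circ(X)$, the Lie algebra $\Lie G$ consists of locally finite derivations, so $\Lie G\subset\t\oplus\u$ by Proposition~\ref{th:lie}, ``hence'' $G\subset\TT\ltimes\U(X)$. You supply exactly the step this ``hence'' glosses over, namely the passage from the Lie algebra inclusion to the group inclusion: ruling out non-solvable $G$ via the non-commuting opposite root $\G_a$-subgroups of $\SL_2$ or $\PSL_2$, writing $G=T_G\ltimes U_G$ with $U_G\subseteq\U(X)$, and conjugating each cocharacter of $T_G$ into $\TT$ by an explicit $\exp(\partial)\in\U(X)$, using the adjoint $\TT$-grading, the abelianness of $\u$, uniqueness of the Jordan decomposition, and maximality of $\TT$. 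This conjugation argument is a group-level analogue of the paper's Lemma~\ref{lem:semisimple} (where conjugation by the exponential of an LND is likewise used to kill non-zero weight components), and it rests on the same key input, Proposition~\ref{th:lie}. Your normality argument coincides with the paper's: conjugation permutes connected algebraic subgroups. (One small slip: equivalence of LNDs gives $\partial=c\,\partial_0$ with $c\in\Frac\ker\partial_0$, not $c\in\ker\partial_0$; local nilpotency of elements of $\u$ still follows, since $(c\,\partial_0)^n f=c^n\partial_0^n f$.)

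The genuine gap is the closedness claim, which is part of the proposition and which you explicitly leave unproven (``I expect the real obstacle to lie here''). The paper does dispose of it, albeit tersely: $\Aut_{\alg}(X)=\TT\ltimes\U(X)$ is a semidirect product of two closed subgroups of $\Aut^\circ(X)$, namely the algebraic (hence closed) torus $\TT$ and the subgroup $\U(X)$, and is therefore closed. The content hidden in that sentence is the closedness of $\U(X)$ itself, which in the abelian case is governed by Lemma~\ref{lem:equivalent}: all LNDs are equivalent, so $\u$ is a vector space of pairwise equivalent LNDs and $\U(X)=\exp(\u)$; the nested presentation $\Aut_{\alg}(X)=\varinjlim\,(\TT\ltimes\U_i)$ that you sketch at the end is carried out in the paper only later, in the proof of Theorem~\ref{th:alg}, and is not what Proposition~\ref{Prop4.4}'s proof invokes. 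So, as submitted, your proposal establishes the equality and the normality but not the closedness; completing it requires precisely the verification you deferred, e.g.\ showing that $\U(X)$ (equivalently, its intersection with each filtration piece of the ind-variety $\Aut(X)$) is Zariski closed, after which the closedness of the semidirect product follows as in the paper.
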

\begin{proof}
 Let $G \subset \Aut^\circ(X)$ be a connected algebraic
 subgroup. Then the Lie algebra 
 $\Lie G$ consists of locally finite derivations and, by Proposition \ref{th:lie}, $\Lie G \subset \t \oplus \u$ as a vector space.

 Since $\u$ is $\t$-stable, there exists a decomposition $\u=\bigoplus_{i=1}^\infty \K\partial_i$ such that $[\t,\partial_i]\subset \K\partial_i$ for each $i=1,2,\ldots$. Thus, $\Lie G\subset \t\oplus \bigoplus_{i=1}^k\K\partial_i$ for some $k$. Therefore, $G\subset \TT \ltimes U_k$, where $U_k=\exp(\bigoplus_{i=1}^k\K\partial_i)$ is a finite-dimensional $\TT$-stable unipotent group, see also \cite[Remark 17.3.3]{FK}.
 This means that $\Aut_{\alg}(X)$ coincides with
$\TT \ltimes \U(X)$.  
 
  The group $\Aut_{\alg}(X)$ is normal in $\Aut^\circ(X)$, because the set of connected algebraic subgroups of $\Aut(X)$ is stable under conjugation.

Let us prove that $\Aut_{\alg}(X)$ is a closed ind-subgroup in $\Aut(X)$. 
Assume that $\U(X)$ is non-trivial, otherwise $\Aut_{\alg}(X)=\TT$ and the statement follows.  

Fix some $\partial_0\in\LND(X)$ and let $x\in{\mathcal O}(X)$ be such that $\partial_0(x)\neq0$.
Fix also a nonzero element of the plinth ideal $b\in\Im(\partial_0)\cap\ker(\partial_0)$. 
Then any LND $\partial\in\LND(X)$ is equal to $\frac{a}{b}\partial_0$ for some $a\in\ker(\partial_0)$. 
Indeed, there exists $f$ such that $\partial_0(f)=b$, hence $\partial = \frac{\partial(f)}{b}\partial_0$, see also \cite[Principle 12]{Fr}.

Let $H\subset\Aut(X)$ be some closed algebraic subset, then $V=\langle H\circ x\rangle_\K$ and $V_\TT = \TT\circ V$ are finite-dimensional subspaces in ${\mathcal O}(X)$.
Take $g\in H\cap\Aut_{\alg}(X)$, then $g\circ x\in V$. 
Consider the decomposition $g=t\cdot u$, where $t\in\TT, u\in\U(X)$. Then $u\circ x\in V_\TT$.

There exists $\partial\in\LND(X)=\u$ such that $u=\exp(\partial)$. 
Let $\partial = \frac{a}{b}\partial_0$, then
\[\exp(\partial_0)\circ x=\sum_{i=0}^s
\frac{\partial_0^i(x)}{i!}\quad\mbox{and}\quad u\circ x = \sum_{i=0}^s
\frac{a^i\partial_0^i(x)}{b^ii!},\]
where $s$ is such that $\partial_0^s(x) \neq 0$ and $\partial_0^{s+1}(x)=0$.

Choose an embedding $X\embed{\mathbb A}^n$ for some $n$ to define the degree on ${\mathcal O}(X)$ as usual: $\deg(f)=\min\{\deg(F)\mid F\in{\mathcal O}({\mathbb A}^n), F|_X = f\}$ for $f\in{\mathcal O}(X)$. 
  Then $\deg (u\circ x)\le \max \{ \deg(v)\mid v \in  V_\TT \}$. 
  Therefore, if the degree of $u\circ x$ is not less than the degree of any summand of its decomposition $\frac{a^i\partial_0^i(x)}{b^ii!}$, 
  then for any $i$

  \[ \deg(u\circ x)= i(\deg(a)-\deg(b))+\deg(\partial_0^i(x))\le \max_{v\in V_\TT} (\deg(v)).\]
Otherwise, there is a cancellation in the decomposition of $u\circ x$, hence two summands have the same degree, and for some different $i,j$
  \[
  i(\deg(a)-\deg(b))+\deg(\partial_0^i(x)) =
  j(\deg(a)-\deg(b))+\deg(\partial_0^j(x)).
  \]
  In both cases there holds
  \[
  \deg(a)-\deg(b)\le \max_{v\in V_\TT} (\deg(v)) + \max_{i\le s} \deg(\partial_0^i(x)),
  \] 
  so $\deg(a)$ is bounded by some number, say, $N$ for any $u = \exp(\frac{a}{b}\partial_0) \in \U(X)$ such that $u \circ x\in  V_\TT$. 
   
Then 
\[U_N = \{\exp\left(\frac{a}{b}\partial_0\right)\mid a\in\ker\partial_0, \deg(a)\le N, \frac{a}{b}\partial_0\in\LND(X)\}\]
is a finite-dimensional subgroup in $\U(X)$ such that $H\cap \Aut_{\alg}(X) = H\cap (\TT\ltimes U_N)$, which is closed.  
Since $H$ is an arbitrary closed algebraic subset, $\Aut_{\alg}(X)$ is a closed ind-subgroup.
\end{proof}

\begin{example}
Given an algebraic curve $C$, the configuration space $X=\mathcal{C}^n(C)$ is the algebraic
variety consisting of all $n$-point subsets of $C$. By \cite[Theorem 1.2]{LZ}, for $n>2$ the neutral component $\Aut^\circ(X)$ is nested and equals the semidirect product $\TT\ltimes\U(X)$, where $\TT$ is a two-dimensional algebraic torus and $\U(X)$ is abelian.
\end{example}

\section{Conclusion}
In the following theorem we reformulate our result  geometrically in terms of fibrations.
We define an $\mathbb{A}^1$-fibration on
X to be a dominant morphism $f \colon X \to Y$ whose general fibers  are isomorphic to the affine line $\mathbb{A}^1$ (see for example \cite{GMM12}).
A $\G_a$-action $H$ on an affine variety $X$ induces the quasi-affine variety $Y=\Spec {\mathcal O}(X)^H$, e.g. see \cite[Theorem 1]{Win03}, and the $\mathbb{A}^1$-fibration $\mu \colon X \to Y$, on the fibers of which $H$ acts.
Moreover,
 equivalent $\G_a$-actions induce the same fibration.

\begin{theorem}\label{th:alg}
If  $\Aut_{\alg}(X)$ consists of algebraic elements, then one of the following holds:
\begin{enumerate}[(i)]
\item there exists a unique ${\mathbb A}^1$-fibration with a quasi-affine  base  
\[\mu\colon X\to Z\]
and $\SAut(X)$ consists of equivalent $\G_a$-actions that act by translations on fibers of $\mu$, see \cite[Section 6.1]{KPZ1}.
Moreover, $\Aut_{\alg}(X)=\TT\ltimes\U(X)$, where $\TT$ is an algebraic torus of dimension $\le\dim X$
and $\U(X)\subset \Aut(X)$ is an  abelian  ind-subgroup which is of infinite dimension if $\dim X \ge 2$. In particular, 
$\Aut_{\alg}(X)$ is a nested ind-group.
\item $\SAut(X)$ is trivial. Then $\Aut_{\alg}(X)$ is a torus, and there are no ${\mathbb A}^1$-fibrations with quasi-affine base.
\end{enumerate} 
\end{theorem}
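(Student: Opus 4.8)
\textbf{Plan of proof for Theorem~\ref{th:alg}.}
The hypothesis that $\Aut_{\alg}(X)$ consists of algebraic elements is exactly condition~(2) of Theorem~\ref{main}, so by that theorem it is equivalent to condition~(1): the subgroup $\U(X)$ is abelian if nontrivial. This immediately bifurcates the argument according to whether $\U(X)$ (equivalently $\SAut(X)$) is trivial or not, which will furnish cases~(ii) and~(i) respectively. The plan is to first dispense with the easy trivial case and then extract the geometric fibration structure in the nontrivial case from the abelianness of $\U(X)$ together with the equivalence of all LNDs (Lemma~\ref{lem:equivalent}).

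For case~(ii), suppose $\SAut(X)=\U(X)$ is trivial. Then $X$ admits no $\G_a$-actions, hence no LNDs, hence no nonconstant $\A^1$-fibration with quasi-affine base (such a fibration would produce a fiberwise-translation $\G_a$-action, contradicting triviality of $\U(X)$). By Proposition~\ref{Prop4.4}, with $\U(X)$ trivial we get $\Aut_{\alg}(X)=\TT\ltimes\U(X)=\TT$, a torus. This settles (ii) directly.

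For case~(i), assume $\U(X)$ is nontrivial and abelian. By Lemma~\ref{lem:equivalent} all LNDs on $X$ are equivalent, i.e.\ they share a common kernel $A\coloneq\ker\partial$; fix any $\partial\in\LND(X)$. The plan is to build the fibration $\mu\colon X\to Z$ from this common kernel: the associated $\G_a$-action has quasi-affine quotient $Z=X/\!\!/U$ whose coordinate ring is $A$, and every other LND, being equivalent, defines the \emph{same} kernel and hence the same quotient morphism $\mu$, which gives uniqueness of the $\A^1$-fibration. Since equivalent LNDs have the same general orbits, all of $\U(X)$ acts by translations along the fibers of $\mu$, as asserted. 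The structural statement $\Aut_{\alg}(X)=\TT\ltimes\U(X)$ with $\TT$ a maximal torus, together with closedness and the nested property, is then quoted directly from Theorem~\ref{main}(3)--(4) and Proposition~\ref{Prop4.4}; the bound $\dim\TT\le\dim X$ holds because a torus acting faithfully on an $n$-dimensional affine variety has rank at most $n$, and the infinite-dimensionality of $\U(X)$ when $\dim X\ge2$ is the Remark following Theorem~\ref{main} (it follows from Proposition~\ref{nonlocfin}-type estimates: a single nontrivial LND already forces $\U(X)$ to be infinite-dimensional, since the fiberwise translations by all elements of the kernel $A$ span an infinite-dimensional space when $\dim A\ge1$).

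\textbf{Main obstacle.} The routine parts are the torus case and citing Theorem~\ref{main}. The genuinely content-bearing step is establishing \emph{uniqueness} of the $\A^1$-fibration and the quasi-affineness of its base $Z$. Quasi-affineness of $X/\!\!/U$ is a known feature of algebraic quotients by $\G_a$ (referenced via \cite[Section 6.1]{KPZ1}), but I must check that the common-kernel property genuinely yields a single morphism rather than merely a single \emph{general} fiber structure: a priori two inequivalent $\A^1$-fibrations could share general fibers on a dense open set. The argument here is that an $\A^1$-fibration with quasi-affine base is determined by the subalgebra of functions constant on its general fibers, and for a fibration coming from a $\G_a$-action this subalgebra is precisely the LND kernel $A$; since all LNDs are equivalent they share $A$, pinning down $\mu$ uniquely. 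Making this last identification fully rigorous — that the fibration is recovered from its ring of invariants and that equivalence of LNDs is the correct notion to force coincidence of fibrations — is where the care must be concentrated.
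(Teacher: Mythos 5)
Your skeleton matches the paper's proof: you bifurcate via Theorem~\ref{main} according to whether $\U(X)$ is trivial, build the fibration in case~(i) from the common LND kernel, and obtain the structure $\Aut_{\alg}(X)=\TT\ltimes\U(X)$, the bound $\dim\TT\le\dim X$, and the infinite-dimensionality of $\U(X)$ for $\dim X\ge 2$ essentially as the paper does (the last point via the subspace $\{f\partial\mid f\in\ker\partial\}$, which is the paper's argument through \cite[Principle 7]{Fr}). However, the genuinely content-bearing step --- uniqueness of the $\A^1$-fibration with quasi-affine base --- is left unresolved, and you say so yourself. Your proposed fix (that such a fibration ``is determined by the subalgebra of functions constant on its general fibers'', and that this subalgebra is an LND kernel) only applies to fibrations you already know come from $\G_a$-actions; it gives no handle on a hypothetical $\A^1$-fibration $\pi\colon X\to B$ with quasi-affine base that is not a priori associated to any LND. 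The same unproven bridge is smuggled into your case~(ii), where you assert without justification that any such fibration ``would produce a fiberwise-translation $\G_a$-action.''

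The missing idea, which is exactly how the paper closes both gaps, is cylindricity: given an $\A^1$-fibration $\pi\colon X\to B$ with quasi-affine base, there is an affine trivialization chart $U\subset B$ with $\pi^{-1}(U)\cong U\times\A^1$, so $X$ is cylindrical in the sense of \cite{KPZ}, and then \cite[Proposition 3.5]{KPZ} produces a $\G_a$-action whose general orbits lie in the fibers of $\pi$. With this bridge, two distinct $\A^1$-fibrations with quasi-affine bases yield two non-equivalent $\G_a$-actions, contradicting Proposition~\ref{nonlocfin}(2) (all $\G_a$-actions on $X$ are equivalent, because all elements of $\Aut_{\alg}(X)$ are algebraic); and in case~(ii) any such fibration would yield a nontrivial $\G_a$-action, contradicting triviality of $\U(X)$. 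Without citing or proving this fact, your uniqueness argument in~(i) and your nonexistence claim in~(ii) do not go through; with it, the rest of your plan is sound and coincides with the paper's proof.
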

\begin{proof}
First, assume that $\U(X)$ is non-trivial. Let us prove that the case (i) holds.

 Since all elements of $\Aut_{\text{alg}}(X)$ are algebraic, Proposition
\ref{nonlocfin} implies that all $\G_a$-actions on $X$ are equivalent.
It is well known that any non-trivial  $\G_a$-action $H=\{\exp(t\partial)\}$, where $\partial\in\LND(X)$,   induces an  $\mathbb{A}^1$-fibration  over an quasi-affine base $X/\!/H$.  Indeed, the invariant ring ${\mathcal O}(X)^{H}=\ker\partial$ is of codimension one in ${\mathcal O}(X)$, so $X\to\Spec{\mathcal O}(X)^{H}$ is a dominant morphism, whose general fibers are one-dimensional, irreducible and coincide with ${\mathbb A}^1$ by \cite[Cor. 1.29]{Fr}.  Conversely,  assume that there are two distinct $\mathbb{A}^1$-fibrations     $\pi_1\colon X \to B_1$ and $\pi_2\colon X \to B_2$  with quasi-affine bases $B_1$ and $B_2$. For  each fibration $\pi_i$ 
there exists an affine trivialization chart $U_i\subset B_i,$ $\pi_i^{-1}(U_i)\cong U_i\times {\mathbb A}^1$. Thus, in terms of  \cite{KPZ}, $X$ is cylindrical. Following \cite[Proposition 3.5]{KPZ} for both fibrations, we obtain two non-equivalent $\G_a$-actions.
This proves the first part of (i).

To prove the second part of  (i) we note that 
$\g=\t\oplus \u$ as a vector space by Proposition~\ref{th:lie}.
If $\dim X = 1$, then $X \simeq \mathbb{A}^1$ by \cite[Cor. 1.29]{Fr}. Otherwise,  by \cite[Principle 7]{Fr}, $\u$ contains an infinite-dimensional subspace $\{f\partial\mid f\in\ker\partial\}$ for any LND $\partial$.
  Moreover, $\u$ is graded by the character lattice of $\TT$, and  one can construct  an increasing sequence $\u_1\subset\u_2\subset\ldots\subset\u$ 
of finite-dimensional $\t$-stable subalgebras that  exhaust $\u$. 
So, we obtain a filtration by finite-dimensional Lie subalgebras
\[\g=\bigcup_{i=1}^\infty \t\oplus \u_i.\]

There exists a commutative unipotent subgroup $\U_i \subset \Aut(X)$ such that $\Lie \U_i = \u_i$ and $G_i=\TT\ltimes \U_i\subset\Aut X$ is an algebraic subgroup with 
the tangent Lie algebra $\t\oplus \u_i$.
We claim that 
$\Aut_{\text{alg}}(X) = \varinjlim G_i$. 
Indeed, for any connected algebraic subgroup $G \subset \Aut(X)$ we have $\Lie G \subset \t\oplus \u_i$, hence $G \subset G_i$ and the claim follows.

Now assume that $\U(X)$ is trivial, i.e., 
 $X$ does not admit a $\G_a$-action. By Proposition \ref{th:lie}, $\g=\t$, where $\t=\Lie\TT$ for a maximal subtorus $\TT\subset\Aut(X)$. Hence, $\Aut_{\alg}(X)=\TT$. 
 \end{proof}

\begin{proof}[Proof of  Theorem~\ref{main}]

Proposition \ref{nonlocfin}(2) and Proposition \ref{Prop4.4} provide the implications
$(2) \Rightarrow (1)$ and $(1) \Rightarrow (4)$
respectively.
The implications $(4)\Rightarrow (3)$ and $(3)\Rightarrow(2)$ are clear. The proof follows.
\end{proof}

\end{document}